\newtheorem{theorem}{Theorem}[section]
\newtheorem{lemma}[theorem]{Lemma}
\newtheorem{proposition}[theorem]{Proposition}
\newtheorem{corollary}[theorem]{Corollary}
\newcounter{claimcounter}
\newenvironment{claim}{\stepcounter{claimcounter}{\bf{Claim \theclaimcounter.}}}{}
\newtheorem*{claimnonum}{Claim}
\newenvironment{claimproof}[0]{\par\noindent\emph{Proof of claim:}}{\hfill $\blacksquare$}
\newcommand\GreenL{\mathcal{L}}
\newcommand\GreenR{\mathcal{R}}
\newcommand\GreenH{\mathcal{H}}
\newcommand\GreenD{\mathcal{D}}
\newcommand\GreenJ{\mathcal{J}}
\newcommand\OrdL{\mathrel{\leq_\mathcal{L}}}
\newcommand\OrdR{\mathrel{\leq_\mathcal{R}}}
\newcommand\OrdJ{\mathrel{\leq_\mathcal{J}}}
\begin{document}
\title{NP-Completeness in the Gossip Monoid}

\maketitle

\begin{center}
PETER FENNER\footnote{Email \texttt{Peter.Fenner@postgrad.manchester.ac.uk}. Peter Fenner's research is supported by an EPSRC Doctoral Training Award.}, MARIANNE JOHNSON\footnote{Email \texttt{Marianne.Johnson@maths.manchester.ac.uk}.}  and MARK KAMBITES\footnote{Email \texttt{Mark.Kambites@manchester.ac.uk}.}

    \medskip

    School of Mathematics, \ University of Manchester, \\
    Manchester M13 9PL, \ England.

 \date{\today}

\keywords{}
\thanks{}
\end{center}

\begin{abstract}
Gossip monoids form an algebraic model of networks with exclusive,
transient connections in which nodes, when they form a connection,
exchange all
known information. They also arise naturally in pure mathematics, as
the monoids generated by the set of all equivalence relations on a
given finite set under relational composition. We prove that a
number of important decision problems for these monoids (including
the membership problem, and hence the problem of deciding whether
a given state of knowledge can arise in a network of the kind under
consideration) are NP-complete. As well as being of interest in their
own right, these results shed light on the apparent difficulty of
establishing the cardinalities of the gossip monoids: a problem which
has attracted some attention in the last few years.
\end{abstract}

\section{Introduction}

\textit{Gossip problems} are concerned with the flow of information
through networks with exclusive, transient connections in which nodes,
when connected, exchange all known information.
The \textit{gossip monoid} (of rank $n$) forms an algebraic model of such
a network (with $n$ nodes). It is a semigroup whose elements correspond
to possible states of knowledge across the network, and in which the
natural multiplication action of a generating set simulates information
flow through the establishment of connections.

Gossip monoids are also of considerable interest for purely mathematical
reasons: they
arise naturally in semigroup theory as the monoids generated by the
set of equivalence relations on a finite set, under the operation of
relational composition. Since equivalence
relations are idempotent, this means gossip monoids are a natural and
interesting family of \textit{idempotent-generated monoids}, the latter
being an area of great current interest in semigroup theory
(see for example \cite{BriMarMea, GraRus1, GraRus2, DolGra, YanDolGou}.)

Despite the obvious importance of gossip monoids, both within semigroup
theory and for applications, remarkably little is known about them. In
the 1970s, a number of authors, including Tijdeman \cite{Tijdeman}, Baker and Shostak \cite{BakSho}, Hajnal, Milner and Szemeredi \cite{HajMilSze}, independently computed the minimum
number of two-way connections required to ensure permeation of all information
throughout the network; in semigroup-theoretic terms this is the length
of the zero element as a word with respect to a particularly natural set of generators. More recently, Brouwer,
Draisma and Frenk \cite{BroDraFre,Frenk} have introduced a continuous (tropical) analogue, termed the
\textit{lossy gossip monoid}, which has some intriguing connections to
composition of metrics and the tropical geometry of the orthogonal group.

Perhaps the most obvious question in this area is: how many elements are
there in the gossip monoid of rank $n$? In other words, how many distinct
knowledge configurations can arise in an $n$-node network of the type
discussed above? In \cite{BroDraFre}
this number was calculated computationally for $n \leq 9$, but no obvious
pattern emerges from their results, and the hope of extending the sequence
much further by similar brute-force
computations seems remote. It remains open whether there is an explicit formula for, or
even a markedly faster way of counting, the cardinality of the gossip monoid of
rank $n$.

In this paper, we consider the complexity of decision problems concerning
gossip monoids. Elements of gossip monoids are naturally represented
by boolean matrices, and an important problem is to decide, given a boolean
matrix, whether it represents a gossip element. One of our main results
is that this problem is NP-complete. As well as being of interest in its
own right, this sheds some light on the apparent difficulty of determining
the cardinality of the gossip monoid, by suggesting that there is no simple
combinatorial characterisation of elements, of the kind which might be
used to count them. We also establish NP-completeness for a number of
other important problems, including Green's $\mathcal{J}$-order.

Besides this introduction, the paper is organised into five sections.
Section~\ref{sec_gossip} recalls the definition and basic properties
of gossip monoids. Section~\ref{sec_problems} introduces the algorithmic
problems we study, and describes in outline our strategy for
establishing their complexity. Sections 4-6 are concerned with the proofs
of NP-hardness for these problems.

\section{The Gossip Monoid}\label{sec_gossip}
In this section we briefly introduce the gossip monoids and their
relationship to knowledge distribution in networks, and discuss some semigroup-theoretic properties of these monoids.

Consider $n$ people (``gossips'') each of whom knows a unique piece of information (a ``scandal'') initially unknown to the others. The people communicate by telephone and in each call the two participants tell each other every scandal they know. The `gossip problem' (what is the minimum number of calls required before every person knows every scandal?) attracted the attention of a number of researchers in the 1970s, including Tijdeman \cite{Tijdeman}, Baker and Shostak \cite{BakSho}, Hajnal, Milner and Szemeredi \cite{HajMilSze}, who proved in a variety of different ways that the minimal number of calls required is:
\begin{align*}
0 \;\text{ if }\; n = 1, \qquad
1 \;\text{ if }\; n = 2, \qquad
3 \;\text{ if }\; n = 3, \qquad
\text{ and } 2n-4 \;\text{ if }\; n \geq 4.
\end{align*}
Whilst the gossip problem concerns only the most efficient means to create one particular state of knowledge between the participants, it is clear that understanding the `gossip state-space' (in other words all possible states of knowledge that can occur in this system) is both a more complicated and more important problem; it is equivalent to understanding the distribution of knowledge within an $n$-node communication network with transient connections, assuming that initially each node possesses a unique piece of information, and that whenever a connection is made nodes exchange all known information.

We begin by showing that this system can be modelled by considering the right action of a particular monoid (the gossip monoid) on the set of all $n \times n$ boolean matrices, which we use to record these states of knowledge. We write $\mathbb{B}$ for the \textit{boolean semiring}, that is, the algebraic
structure comprising the set $\lbrace 0, 1 \rbrace$ equipped with the
operations of maximum (logical ``or'') and multiplication (logical ``and''.)
Algebraically, this structure is somewhat like a commutative ring, with maximum playing the role of addition, the fundamental difference being that the addition operation is non-invertible. The set $\mathbb{B}_n$ consisting of all $n \times n$ boolean matrices (that
is, matrices with entries from $\mathbb{B}$) forms a monoid under the matrix
multiplication induced from the operations in $\mathbb{B}$. (In other words, for $A, B \in \mathbb{B}_n$ the $(i,j)$th entry of the product $AB$ is obtained by taking the \emph{maximum} over $k$ of the products $a_{i,k}b_{k,j}$. It is easy to see that the `usual' identity matrix also behaves as an identity with respect to this new multiplication.) Moreover, there is a natural partial order on $\mathbb{B}_n$ induced from the natural order on $\mathbb{B}$; for $A, B \in \mathbb{B}_n$ we say that $A \preceq B$ if and only if $a_{i,j} \leq b_{i,j}$ for all indices $i,j$. We will then write $A \prec B$ to mean $A \preceq B$ but $A \neq B$.

Each matrix in $\mathbb{B}_n$ can be thought of as representing a state of knowledge distribution within
an $n$-node network: the entry in row $i$ and column $j$ being $1$ exactly if node $j$
(or ``person $j$'', in gossip problem terminology) has learnt the knowledge initially possessed by node $i$ (``scandal $i$''.) In particular, the identity matrix $I_n$ corresponds to the initial state of knowledge (each node knowing only what it knows initially.)

For $i, j \in \lbrace 1, \dots, n \rbrace$ the \textit{call matrix} $C[i,j]$ is
the matrix with $1$s on the main diagonal, and also in the $(i,j)$ and $(j,i)$
positions, and $0$s elsewhere. It is readily verified that if $K \in \mathbb{B}_n$ represents a state of knowledge,
then the product $K C[i,j]$ represents the state of knowledge resulting by
starting in the state represented by $K$ and exchanging all information (a ``call'') between node
$i$ and node $j$. The \textit{gossip monoid} $G_n$ of rank $n$ is the submonoid of $\mathbb{B}_n$ generated
by the set of all call matrices,
$$\lbrace C[i,j] \mid i, j \in \lbrace 1, \dots, n \rbrace \rbrace.$$
Its elements are exactly those matrices modelling states of knowledge which can
actually arise in a network of the kind under consideration, starting from
the initial state of knowledge and proceeding through calls in which two
nodes exchange all information available to them.

In matrix terms, the effect of right multiplication by $C[i,j]$
is to replace columns $i$ and $j$ with their element-wise maximum. Notice
in particular that this action is \textit{monotonic}, in the sense that $K \preceq KC[i,j]$; this corresponds to the fact that nodes
do not forget things once learnt. (Dually, one can see that the effect of \emph{left} multiplication by $C[i,j]$ is to replace \emph{rows} $i$ and $j$ with their element-wise maximum, and so it is clear that this left action is also monotonic, that is $K \preceq C[i,j]K$.) Notice also that this multiplication respects the partial order, in that $K \preceq L$ implies $KC[i,j] \preceq LC[i,j]$ and $C[i,j]K \preceq C[i,j]L$. Repeated use of these two properties gives us $AC \preceq ABC$ for all $A,B,C \in G_n$.

Recall that \emph{Green's relations} \cite{Green,CliPre} are five equivalence relations ($\GreenR$, $\GreenL$, $\GreenH$, $\GreenD$ and $\GreenJ$) and three pre-orders ($\OrdR$, $\OrdL$ and $\OrdJ$) which can be defined on any monoid (or semigroup) and play a fundamental role in understanding its structure. In the case of $G_n$, the five equivalence relations are
easily seen to be trivial --- we say that $G_n$ is a \textit{$\GreenJ$-trivial} monoid --- but the pre-orders remain important and we recall their definitions. We define $A \OrdR B$ [respectively,
$A \OrdL B$] if there exists a $C \in G_n$ with $A = BC$ [respectively, $A = CB$], and $A \OrdJ B$ if there exists $D, E \in G_n$ with $A = DBE$.
Notice that $A \OrdR B$ if and only if the state of knowledge represented by matrix $A$ can be obtained by starting with the state of knowledge represented by matrix $B$ and applying a sequence of calls.

It is straightforward to verify (as we shall explain below) that the call matrices satisfy the
following relations for all distinct values $i,j,k,l$:
\begin{eqnarray}
\label{idmpt}C[i,j] \ C[i,j] &\ = \ & C[i,j]\\
\label{commute}C[i,j] \ C[k,l]&\ = \ & C[k,l]C[i,j]\\
\label{transfer}C[i,j] \ C[j,k] \ C[i,j]& \ = \ & C[j,k] \ C[i,j] \ C[j,k].
\end{eqnarray}
In terms of gossip, the first set of relations simply record the fact if two people make a repeat
call when nothing has occurred in the interim then no information is exchanged in the second call. The second set of relations corresponds to the fact that if the two pairs of callers $\{i,j\}$ and $\{k,l\}$ are disjoint, then it doesn't matter in what order the calls between these pairs take place. (In the situation we are modelling, these communications could in fact occur concurrently.) The third set of relations, which are a kind of \textit{braid relation}, can easily be verified by matrix multiplication.

It follows immediately from the relations above that $G_n$ is the homomorphic image of an infinite 0-Hecke monoid (see \cite{MazSte}) corresponding to the Coxeter presentation with $n\choose 2$ involutions $c_{i,j}$ satisfying relations analogous to \eqref{commute} and \eqref{transfer} above. We also note that the \emph{double Catalan monoid} $DC_n$ studied by Mazorchuk and Steinberg \cite{MazSte} is the submonoid of $G_n$ generated by the call matrices $C[i,i+1]$ for $i=1, \ldots, n-1$; in fact the latter can be thought of as an algebraic model of \emph{linear} networks with exclusive, transient connections in which nodes, when they form a connection, exchange all known information.

The set of all call matrices is a particularly natural set of idempotent generators, which  may be generalised as follows. If $S \subseteq \lbrace 1, \dots, n \rbrace$ then the \textit{conference call matrix} $C[S]$ is the matrix with $1$s on the main diagonal and in the $(i,j)$ position for all $i, j \in S$, and $0$s elsewhere. In terms of knowledge distribution, right multiplication by $C[S]$ models a complete exchange of knowledge between the nodes in $S$; in matrix terms it replaces each column whose index is in $S$ with the element-wise maximum of all columns whose indices are in $S$. (The action of \emph{left} multiplication by $C[S]$, corresponding to replacing each \emph{row} whose index is in $S$ with the maximum of all
\emph{rows} whose indices are in $S$, is algebraically dual but less easy to visualise in terms of networks.) It is easy to see that every conference
call matrix is an idempotent element of $G_n$; indeed it follows from the solution to the original gossip problem
\cite{BakSho,HajMilSze,Tijdeman} that $C[S]$ is a product of $3$ call matrices if $|S| = 3$ and $2 |S|-4$ call
matrices if $|S| \geq 4$, these numbers of call matrices being the minimum possible.

We may associate to each element of $\mathbb{B}_n$ the binary relation on the set $\lbrace 1, \dots, n \rbrace$ of which it is the adjacency matrix, that is, the relation where $i$ is related to $j$ if and only if the $(i,j)$ entry of the matrix is $1$; matrix multiplication in $\mathbb{B}_n$ then corresponds to \textit{relational composition} of binary relations. It is easy to see that every call matrix (and indeed every conference call matrix)  is the adjacency matrix of an equivalence relation. In fact, equivalence relations correspond exactly to \textit{idempotent} elements of $G_n$:

\begin{proposition}
\label{Prop}
The idempotents in $G_n$ are precisely the matrices in $\mathbb{B}_n$ which correspond to equivalence relations on $\{1,\ldots, n\}$. (In other words, $A \in G_n$ is an idempotent if and only if there exists an equivalence relation $\sim$ on $\{1, \ldots, n\}$ such that $a_{i,j} = 1 \Leftrightarrow i \sim j$.)
\end{proposition}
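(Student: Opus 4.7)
The plan is to handle the two directions separately. For the easy direction---that every equivalence relation matrix is an idempotent of $G_n$---one writes the equivalence relation as a disjoint union of classes $S_1, \ldots, S_r$, and observes that the corresponding matrix equals the product $\prod_l C[S_l]$ of conference call matrices. This lies in $G_n$ by the discussion preceding the statement, and is easily seen to be idempotent, either by direct boolean computation or by noting that composing a reflexive symmetric transitive relation with itself returns the same relation.

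For the converse, let $A \in G_n$ be idempotent and fix an expression $A = B_1 B_2 \cdots B_m$ with each $B_k = C[i_k, j_k]$ a call matrix. Reflexivity of $A$ is immediate, since every call matrix has $1$s on the diagonal and this property is clearly preserved under boolean matrix multiplication. The crucial step is to show that $A B_k = A$ for each $k$: by the monotonicity property $X \preceq X C[i,j]$ established earlier, one has $A \preceq A B_1 \cdots B_{k-1} \preceq A B_1 \cdots B_m = A \cdot A = A$, forcing $A B_1 \cdots B_{k-1} = A$ and hence $A B_k = A B_1 \cdots B_{k-1} B_k = A$. Since right-multiplication by $C[i_k, j_k]$ replaces columns $i_k$ and $j_k$ with their element-wise maximum, the identity $A B_k = A$ forces columns $i_k$ and $j_k$ of $A$ to coincide.

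I then define an equivalence relation $\simeq$ on $\{1, \ldots, n\}$ by $i \simeq j$ iff columns $i$ and $j$ of $A$ are equal. By the previous paragraph, each pair $(i_k, j_k)$ satisfies $i_k \simeq j_k$, so the equivalence $\equiv$ given by connected components of the graph on $\{1, \ldots, n\}$ with edges $\{i_k, j_k\}$ is contained in $\simeq$. An easy induction on $m$ shows that $A_{ij} = 1$ implies $i \equiv j$: at the inductive step, an entry of $A = A' C[i_m, j_m]$ is produced either from an inherited entry of $A'$ or by merging columns $i_m$ and $j_m$, and in either case $i$ and $j$ end up connected in the call graph. Combining these yields $A_{ij} = 1 \Rightarrow i \simeq j$. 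The converse implication is immediate: if $i \simeq j$ then columns $i$ and $j$ coincide, and the diagonal entry $A_{ii} = 1$ forces $A_{ij} = 1$. Hence $A$ is precisely the matrix of the equivalence relation $\simeq$.

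The main obstacle is the step showing that each call pair $(i_k, j_k)$ from an expression for $A$ yields equal columns in $A$; this is the one place where idempotency enters in an essential way, translating the algebraic identity $A^2 = A$ into a concrete structural statement from which the partition of $\{1, \ldots, n\}$ can be read off directly. The remaining ingredients---reflexivity, and the inductive tracking of information flow along the call graph---are routine.
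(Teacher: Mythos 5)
Your proof is correct, and for the hard direction it takes a genuinely different route from the paper. The paper verifies directly that the relation $i \sim j \iff a_{i,j}=1$ is an equivalence: reflexivity from the diagonal, transitivity straight from the entrywise form of $A^2 = A$, and symmetry via a less obvious trick --- since $A^m = A$, the word $A_m\cdots A_1$ occurs as a scattered subsequence of a word representing $A$, and reversing a product of (symmetric) call matrices gives the transpose, whence $A^T \preceq A$. You instead exploit idempotency through the sandwich $A \preceq AB_1\cdots B_{k-1} \preceq AB_1\cdots B_m = A$, concluding $AC[i_k,j_k]=A$ for every call in a fixed factorisation, hence that columns $i_k$ and $j_k$ of $A$ coincide; you then identify $A$ outright with the adjacency matrix of the manifestly-equivalence relation ``equal columns,'' using an induction on partial products to show nonzero entries of $A$ only occur within connected components of the call graph. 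The trade-off: the paper's argument gets transitivity in one line but needs the scattered-subsequence/transpose argument for symmetry, whereas in your version symmetry and transitivity are free (column equality is an equivalence) at the cost of the call-graph induction; your use of idempotency via $AB_k = A$ also gives the extra structural fact that $A$ absorbs each of its own generating calls. The easy direction (equivalence relation $\Rightarrow$ idempotent of $G_n$ via conference call matrices of the classes) is the same in both.
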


\begin{proof}
First let $A \in G_n$ be an idempotent and let $\sim$ denote the binary relation on $\{1, \ldots, n\}$ given by $i \sim j$ if and only if $a_{i,j} = 1$. Since $A$ is a product of call matrices, it is clear that $a_{i,i}=1$ and hence $i \sim i$ for all $i$. Since $A$ is idempotent we note that $a_{i,j} = \max_k\{a_{i,k}a_{k,j}\}$, and hence whenever $i\sim k$ and $k \sim j$, we must also have $i\sim j$. To see that $\sim$ is symmetric we note that $A$ can be written as a product of call matrices, $A = A_1 \cdots A_m$ for some $m$, and since $A$ is idempotent we have $A^m = A$. We can therefore write $A$ as a product which contains $A_m, \ldots , A_1$ as a scattered subsequence. Since right multiplication by call matrix $C[i,j]$ replaces columns $i$ and $j$ with their maximum and left multiplication by the same matrix replaces rows $i$ and $j$ with their maximum, we see that $A_m \cdots A_1 = A^T$. We therefore have $A^T \preceq A$, and so if $i \sim j$ then $j \sim i$.

For the converse, let $\sim$ be an equivalence relation on $\{1, \ldots, n\}$ and let $A$ be the corresponding adjacency matrix. It is easy to see that $A$ is the product (in any order) of the conference call matrices corresponding to the equivalence classes of $\sim$, so that  $A \in G_n$. It remains to show that $A$ is an idempotent. It follows from reflexivity of $\sim$ that $B := A^2 \succeq A$, since $b_{i,j} \geq a_{i,j}a_{j,j}$. Finally we note that if $b_{i,j}=1$, then there exists $k$ such that $a_{i,k}a_{k,j}=1$. By transitivity of $\sim$ we conclude that $a_{i,j}=1$, hence showing that $A^2=A$.\end{proof}

\begin{corollary}
$G_n$ is the submonoid of $\mathbb{B}_n$ generated by the adjacency matrices of equivalence relations.
\end{corollary}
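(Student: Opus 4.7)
The plan is to establish the corollary by proving two inclusions between submonoids of $\mathbb{B}_n$, both of which fall out of Proposition~\ref{Prop} (and its proof) with essentially no extra work.

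For the first inclusion, I would observe that every call matrix $C[i,j]$ is itself the adjacency matrix of an equivalence relation on $\{1,\ldots,n\}$, namely the equivalence relation whose only non-singleton class is $\{i,j\}$. Since $G_n$ is generated (as a submonoid of $\mathbb{B}_n$) by the call matrices, it is certainly contained in the submonoid generated by the (larger) collection of all adjacency matrices of equivalence relations on $\{1,\ldots,n\}$.

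For the reverse inclusion, I would appeal directly to Proposition~\ref{Prop}: if $A \in \mathbb{B}_n$ is the adjacency matrix of an equivalence relation $\sim$, then $A$ is an idempotent of $G_n$, and in particular $A \in G_n$. (The proof of the converse direction of the proposition in fact exhibits $A$ explicitly as a product of conference call matrices, one for each equivalence class of $\sim$, each of which lies in $G_n$.) Hence every generator of the submonoid in question already lies in $G_n$, so the submonoid generated by the adjacency matrices of equivalence relations is contained in $G_n$.

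Combining the two inclusions gives the stated equality. There is no real obstacle here: the content of the corollary is already subsumed by Proposition~\ref{Prop}, and the role of the corollary is simply to repackage that result as an alternative generating set for $G_n$, highlighting that $G_n$ is naturally the monoid generated by equivalence relations under relational composition, as advertised in the introduction.
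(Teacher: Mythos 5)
Your proposal is correct and follows essentially the same route as the paper: one inclusion because call matrices are themselves adjacency matrices of equivalence relations, and the other because Proposition~\ref{Prop} places every such adjacency matrix inside $G_n$. The extra remarks about conference call matrices are consistent with the proposition's proof but add nothing beyond the paper's argument.
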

\begin{proof}
One containment follows from the fact that call matrices (which by definition generate $G_n$) are adjacency matrices of equivalence relations, and the other from
the fact that, by Proposition~\ref{Prop}, $G_n$ contains all adjacency matrices of equivalence relations.
\end{proof}

Throughout this paper, when the sizes are understood we write $1$ for the matrix consisting entirely of ones, $0$ for the zero matrix and $I_n$ for the identity matrix. Similarly, we shall write $\underline{0}$ to denote the vector of zeros and $\underline{1}$ to denote the vector of all ones.

\section{Algorithmic Problems in the Gossip Monoid}\label{sec_problems}
We shall consider the following decision problems concerning the
gossip monoids:
\begin{itemize}
\item the \emph{Gossip Membership Problem} (GMP) is to determine whether a given
$A \in \mathbb{B}_n$ is a member of $G_n$;
\item the \emph{Gossip $\mathcal{J}$-Order Problem} (GJP) is to determine,
given matrices $X, Y \in G_n$, whether $X \leq_\mathcal{J} Y$, that is, whether there exist matrices $U,V \in G_n$ with $UYV = X$;
\item the \emph{Gossip Transformation Problem} (GTP) is to determine,
given matrices $X, Y \in \mathbb{B}_n$, whether there exists a matrix $G \in G_n$
such that $X G = Y$;
\item the \emph{Maximal Gossip Transformation Problem} (MGTP) is the
restriction of GTP to pairs of matrices $X$, $Y$ where the matrix $X$
satisfies the \emph{maximal column condition}: $X$ is non-zero and the set of distinct columns of $X$ form an anti-chain. (In other words, every column is non-zero and maximal among the set of columns.)

\end{itemize}

Of these problems, the first three are of clear importance in their
own right.
GMP is critical for understanding the gossip monoid, and also has an
obvious application to deciding whether a given knowledge configuration
is (absolutely) reachable in a network. GTP corresponds to understanding orbits under
the natural action of $G_n$ on $\mathbb{B}_n$, and hence whether
one given knowledge configuration is reachable from another. The
$\mathcal{J}$-order is the key to understanding the ideal structure of a
$\mathcal{J}$-trivial monoid, and so GJP is essential for understanding
the semigroup-theoretic structure of $G_n$.
The final problem, MGTP, is less natural but is included because it functions
as a stepping stone in the proof of NP-hardness for GMP.

By monotonicity, and using the fact that there are only $n(n-1)$ zeros in the identity matrix, we see that there is a polynomial bound on the length of elements of $G_n$ as a product of the (quadratically many)
call generators. (In
\cite{BroDraFre} it is shown that this polynomial bound can be lowered to $n(n-1)/2$.) It is therefore possible for a non-deterministic
polynomial time computation to guess an element of $G_n$. This
clearly suffices to show that all of the above problems are in NP.

The remainder of the article is therefore concerned with establishing NP-hardness.
We first show that MGTP (and hence also GTP) is NP-hard, by a relatively
straightforward reduction from the \textit{Dominating Set Problem}. Then we show that GJP is NP-hard by a reduction from GTP. Finally, GMP is shown to be NP-hard by a (rather more complex) reduction from MGTP.

We briefly recall the definition of the Dominating Set Problem. Given an undirected graph $H$ with vertex set $V$ and edge set
$E \subseteq V \times V$, we say that $D \subseteq V$ is a
\emph{dominating set} for $H$ if every vertex is either in $D$
or adjacent to a vertex in $D$. Given a graph $H$ with vertex set $V$ and a positive integer
$k \leq |V|$, the Dominating Set Problem is to decide whether $H$
admits a dominating set of size at most $k$.
The problem is known to be NP-complete \cite[Problem GT2, Appendix A1]{GarJoh79} via a transformation from Vertex Cover. (See for example \cite[Theorem 2.8.6]{Jungnickel} for details.)

\section{NP-completeness of the Gossip Transformation Problems}

In this section we show that the Gossip Transformation Problem and the Maximal Gossip Transformation Problem are NP-complete. We show the NP-hardness of these problems with a polynomial time reduction from the Dominating Set Problem.

\begin{theorem}
\label{transthm}
GTP and MGTP are NP-complete.
\end{theorem}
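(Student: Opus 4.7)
The plan is to reduce the Dominating Set Problem to MGTP in polynomial time; since MGTP is a restriction of GTP, the same reduction also establishes NP-hardness of GTP, and combined with the earlier observation that both problems lie in NP this yields NP-completeness of both.

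Given an undirected graph $H$ with vertex set $V = \{v_1, \ldots, v_m\}$ and a positive integer $k \leq m$, I would construct matrices $X, Y \in \mathbb{B}_n$ with $n$ polynomial in $m+k$, $X$ satisfying the maximal column condition, such that $H$ admits a dominating set of size at most $k$ if and only if $XG = Y$ for some $G \in G_n$. The guiding intuition is that right multiplication by a gossip element acts on the columns of $X$ by taking element-wise joins over subsets: right multiplication by a conference call matrix $C[S]$ replaces the columns indexed by $S$ by their common join, and more generally column $j$ of $XG$ is the join of the columns of $X$ indexed by $\{i : G_{ij} = 1\}$, a set always containing $j$ itself since $G \succeq I_n$ by iterated monotonicity. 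With this in mind, I encode: (i) one ``vertex column'' per $v \in V$ whose support in a block of designated ``vertex rows'' is the closed neighborhood $N[v]$, bearing a uniqueness marker in an auxiliary row so as to be incomparable to every other column; and (ii) $k$ ``slot columns'' carrying pairwise distinct markers of their own. The matrix $Y$ is then chosen so that each of the $k$ slot positions of $Y$ has been enlarged by the closed neighborhood of some vertex, and the union of the enlargements spans the entire vertex-row block.

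For the forward direction, given a dominating set $D = \{v_{i_1}, \ldots, v_{i_t}\}$ with $t \leq k$, one exhibits $G$ explicitly as a product of conference call matrices, each pairing one chosen vertex column with one distinct slot column, and verifies $XG = Y$ by direct matrix computation; the maximal column condition is used only to ensure a valid MGTP instance.

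The main difficulty lies in the reverse direction: from a hypothetical $G$ with $XG = Y$, one must extract at most $k$ vertices of $H$ whose closed neighborhoods cover $V$. The plan is to combine the column-merge description of the right action with the maximal column condition on $X$---which prevents any column of $X$ from being dominated by another---to pin down, for each column of $Y$, exactly which columns of $X$ may contribute to it. The uniqueness markers then force that at most one vertex column can be absorbed into each slot, giving at most $k$ chosen vertices, while the requirement that the vertex-row block be covered by the slot columns of $Y$ forces those vertices to form a dominating set. The principal technical obstacle is designing the marker rows (and any auxiliary padding columns needed to keep $X$ square and maximal) rigidly enough that this correspondence is forced and no ``cheating'' merges can disguise a cover that does not correspond to $\leq k$ vertices; that case analysis is the heart of the proof.
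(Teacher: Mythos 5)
Your overall strategy --- a polynomial reduction from Dominating Set to MGTP, using the observation that column $j$ of $XG$ is the join of the columns of $X$ indexed by the support of column $j$ of $G$ --- is exactly the paper's, but your concrete construction has a genuine gap centred on the target matrix $Y$. In a many-one reduction, $Y$ must be computable from $(H,k)$ alone, yet you describe it as ``chosen so that each of the $k$ slot positions of $Y$ has been enlarged by the closed neighborhood of some vertex'': \emph{which} vertex is precisely the information the reduction cannot know in advance. Your uniqueness markers make this worse rather than better: since the requirement is exact equality $XG = Y$ (not $\preceq$), a slot column of $Y$ has a $1$ in the marker row of vertex $v$ if and only if the vertex column of $v$ is merged into that slot, so the marker pattern of the slot columns of $Y$ would have to encode the dominating set itself. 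If you instead set those marker entries of $Y$ all to $0$ (then no vertex column may ever be merged into a slot) or all to $1$ (then every vertex column must be merged into every slot), the equivalence collapses in one direction or the other. The paper sidesteps both problems: it enforces the maximal column condition not with marker rows but by replacing every non-maximal column of the closed-neighbourhood matrix $M$ with a fixed maximal column (noting that maximal columns suffice to witness domination), and it makes the slot columns of the target $B$ uniformly all ones on the vertex rows --- realised in the forward direction by a final conference call merging all $k$ slots --- so that $B$ records only that the union of the absorbed neighbourhoods covers $V$, never which neighbourhoods were used.

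Separately, the reverse direction, which you rightly call the heart of the proof, is only gestured at. The paper's key step there is a concrete claim: for each slot column $r$, at most one call $C[i,r]$ with $i$ outside the slot block can change column $r$, because after such a call the vertex-row part of column $r$ equals a nonzero column of $M'$, and a second change would produce a strictly larger vector that must again equal a column of $M'$, contradicting maximality; hence at most $k$ neighbourhood columns are ever copied into the slots, and since the slots must end up all ones on the vertex rows, those at most $k$ columns cover $V$. Without this lemma (or a worked-out substitute adapted to your marker design, which as noted cannot stand as described), the bound of at most $k$ chosen vertices is not established, so the reduction's correctness is not proved.
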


\begin{proof}
We have already seen that both are in NP, and since MGTP is a restricted
version of GTP it suffices to show that MGTP is NP-hard. As discussed above,
we do this by reduction from the Dominating Set Problem.

Let $H$ be a graph, say with vertex set $V = \{1, \ldots, n\}$, and let
$1 \leq k \leq n$. We will construct matrices $A,B \in \mathbb{B}_{3n}$, with $A$ satisfying
the maximal column condition, such that $AG = B$ for some $G \in G_{3n}$
if and only if $H$ admits a dominating set of size at most $k$.

If $k = n$, then $V$ is a dominating set with size at most $k$, so we let $A = B = I_{2n}$. Otherwise, define a matrix
$M \in \mathbb{B}_n$ by
$$m_{i,j} = 1 \ \iff \ (i = j) \textrm{ or } (i \textrm{ and } j \textrm{ are adjacent in } H.)$$
It is easy to see that $D \subseteq \lbrace 1, \dots, n \rbrace$ is a
dominating set for $H$ if and only if
$\max\limits_{j \in D} (m_{i,j}) = 1$ for every $i$, so there exists
a dominating set of size at most $k$ if and only if there exists a set
of $k$ or fewer columns of $M$ whose maximum is $\underline{1}$.
Clearly, such a set of columns exists if and only if there is
a set of $k$ or fewer \emph{maximal} columns of $M$ whose maximum is
$\underline{1}$.

$M$ has at least one maximal column, $m$. Replace each non-maximal column of $M$ with $m$ and call the
resulting matrix $M'$. This ensures that $M'$ satisfies the maximal column condition but no columns are added which are not already columns of $M$. Note that finding the non-maximal column
vectors only requires $n(n-1)/2$ vector comparisons, so $M'$ can be
computed in polynomial time. Let
\begin{center}
\begin{tikzpicture}[
style1/.style={
  matrix of math nodes,
  every node/.append style={text width=#1,align=center,minimum height=5ex},
  nodes in empty cells,
  left delimiter=[,
  right delimiter=],
  },
style2/.style={
  matrix of math nodes,
  every node/.append style={text width=#1,align=center,minimum height=5ex},
  nodes in empty cells,
  left delimiter=\lbrace,
  right delimiter=\rbrace,
  }
]
\matrix[style1=0.30cm] (1mat)
{
  & & & & & \\
  & & & & & \\
  & & & & & \\
};
\draw[dashed]
  (1mat-1-1.south west) -- (1mat-1-6.south east);
\draw[dashed]
  (1mat-2-1.south west) -- (1mat-2-6.south east);
\draw[dashed]
  (1mat-1-2.north east) -- (1mat-3-2.south east);
\draw[dashed]
  (1mat-1-4.north east) -- (1mat-3-4.south east);
\node[font=\Large]
  at (1mat-1-1.east) {$M'$};
\node[font=\Large]
  at (1mat-1-3.east) {$0$};
\node[font=\Large]
  at (1mat-1-5.east) {$0$};
\node[font=\Large]
  at (1mat-2-1.east) {$0$};
\node[font=\Large]
  at (1mat-2-3.east) {$1$};
\node[font=\Large]
  at (1mat-2-5.east) {$1$};
\node[font=\Large]
  at (1mat-3-1.east) {$0$};
\node[font=\Large]
  at (1mat-3-3.east) {$0$};
\node[font=\Large]
  at (1mat-3-5.east) {$0$};
\draw[decoration={brace,raise=7pt},decorate]
  (1mat-1-1.north west) --
  node[above=8pt] {$n$}
  (1mat-1-2.north east);
\draw[decoration={brace,raise=7pt},decorate]
  (1mat-1-3.north west) --
  node[above=8pt] {$n$}
  (1mat-1-4.north east);
\draw[decoration={brace,raise=7pt},decorate]
  (1mat-1-5.north west) --
  node[above=8pt] {$n$}
  (1mat-1-6.north east);
\draw[decoration={brace,raise=12pt},decorate]
  (1mat-1-6.north east) --
  node[right=15pt] {$n$}
  (1mat-1-6.south east);
\draw[decoration={brace,raise=12pt},decorate]
  (1mat-2-6.north east) --
  node[right=15pt] {$n$}
  (1mat-2-6.south east);
\draw[decoration={brace,raise=12pt},decorate]
  (1mat-3-6.north east) --
  node[right=15pt] {$n$}
  (1mat-3-6.south east);
\node[left=15pt] at (1mat-2-1.west) {$A = $};

\matrix[style1=0.40cm, right=5.25cm] (2mat)
{
  & & & & & \\
  & & & & & \\
  & & & & & \\
};
\draw[dashed]
  (2mat-1-1.south west) -- (2mat-1-6.south east);
\draw[dashed]
  (2mat-2-1.south west) -- (2mat-2-6.south east);
\draw[dashed]
  (2mat-1-2.north east) -- (2mat-3-2.south east);
\draw[dashed]
  (2mat-1-4.north east) -- (2mat-3-4.south east);
\draw[dashed]
  (2mat-1-5.north east) -- (2mat-1-5.south east);
\node[font=\Large]
  at (2mat-1-1.east) {$M'$};
\node[font=\Large]
  at (2mat-1-3.east) {$M'$};
\node[font=\Large]
  at (2mat-1-5) {$1$};
\node[font=\Large]
  at (2mat-1-6) {$0$};
\node[font=\Large]
  at (2mat-2-1.east) {$1$};
\node[font=\Large]
  at (2mat-2-3.east) {$1$};
\node[font=\Large]
  at (2mat-2-5.east) {$1$};
\node[font=\Large]
  at (2mat-3-1.east) {$0$};
\node[font=\Large]
  at (2mat-3-3.east) {$0$};
\node[font=\Large]
  at (2mat-3-5.east) {$0$};
\draw[decoration={brace,raise=7pt},decorate]
  (2mat-1-1.north west) --
  node[above=8pt] {$n$}
  (2mat-1-2.north east);
\draw[decoration={brace,raise=7pt},decorate]
  (2mat-1-3.north west) --
  node[above=8pt] {$n$}
  (2mat-1-4.north east);
\draw[decoration={brace,raise=7pt},decorate]
  (2mat-1-5.north west) --
  node[above=8pt] {$k\;$}
  (2mat-1-5.north east);
\draw[decoration={brace,raise=7pt},decorate]
  (2mat-1-6.north west) --
  node[above=8pt] {$\;\;\;n-k$}
  (2mat-1-6.north east);
\draw[decoration={brace,raise=12pt},decorate]
  (2mat-1-6.north east) --
  node[right=15pt] {$n$}
  (2mat-1-6.south east);
\draw[decoration={brace,raise=12pt},decorate]
  (2mat-2-6.north east) --
  node[right=15pt] {$n$}
  (2mat-2-6.south east);
\draw[decoration={brace,raise=12pt},decorate]
  (2mat-3-6.north east) --
  node[right=15pt] {$n$}
  (2mat-3-6.south east);
\node[left=15pt] at (2mat-2-1.west) {and \; $B = $};
\node[right=25pt] at (2mat-2-6.east) {.};
\end{tikzpicture}
\end{center}

Note that the matrix $A$ satisfies the maximal column condition since $M'$ does, and $A$ and $B$ give an instance of the problem $MGTP$.

If $H$ admits a dominating set of size $k$ or less then there is a set
of $k$ or fewer columns of $M'$ whose maximum is $\underline{1}$. We
can multiply matrix $A$ on the right
by call matrices which copy each of these columns into a unique
column between $2n+1$ and $2n+k$. These call matrices will not affect the first $n$ rows of columns $1, \dots, n$ but will put a 1 in each of the first $n$ rows in columns $2n+1, \dots, 2n+k$. Multiplying this product on the right by the conference
call matrix $C[\lbrace 2n+1, \dots, 2n+k\rbrace]$ and by each of the call matrices $C[i, n+i]$ for $1 \leq i \leq n$, in any order, will therefore result in matrix $B$.
The product of these call matrices is thus an element $G \in G_{3n}$ such that
$AG = B$.

Conversely, if there is a $G \in G_{3n}$ such that $AG = B$, then there
is a sequence of call matrices whose product is $G$, say $G = C_1 C_2 \cdots C_q$.
Without loss of generality we may clearly assume that $A C_1 \cdots C_p \prec A C_1 \cdots C_p C_{p+1}$ for all $p$.

Since right multiplication by a call matrix has the effect of replacing two columns with their maximum, it is clear that each column of each product $A C_1 \cdots C_p$ must be equal to the maximum of some collection of columns of $A$. In particular, the first $n$ rows of each column must equal the maximum of some collection of columns of $M'$. In the product $B = A C_1 \cdots C_q$ the first $n$ rows of column $n+i$, for $1 \leq i \leq n$, equal the $i$th column of $M'$. By monotonicity and the fact that $M'$ satisfies the maximal column condition, the first $n$ rows of column $n+i$ must equal either $0$ or the $i$th column of $M'$ for every product $A C_1 \cdots C_p$.

\begin{claimnonum}
For each $r \in \{2n + 1, \ldots, 2n + k\}$, there is at most one matrix $C_p$ of the form $C[i,r]$ such that $i \leq 2n$ and the two products $A C_1 \cdots C_{p-1}$ and $A C_1 \cdots C_p$ differ on column $r$.
\end{claimnonum}

\begin{claimproof}
Assume for contradiction that the sequence $C_1,\dots,C_q$ contains
call matrices $C_s = C[i,r]$ and $C_t = C[j,r]$, in that order, with $i,j \leq 2n$ and $2n+1 \leq r \leq 2n+k$, such that the products $AC_1\cdots C_{s-1}$ and $AC_1\cdots C_s$ differ on column $r$ and the products $AC_1\cdots C_{t-1}$ and $AC_1\cdots C_t$ also differ on column $r$.

We observe that the last $2n$ rows of column $r$ are identical in $A$ and $B$, so by monotonicity the differences in column $r$ occur in the first $n$ rows. Let $v_s, v_t \in \mathbb{B}^n$ be the first $n$ rows of column $r$ in the matrices $AC_1 \cdots C_s$ and $AC_1 \cdots C_t$, respectively. Since the differences occur in the first $n$ rows, and by monotonicity, we have $0 \prec v_s \prec v_t$. As the last call matrix in the product $AC_1 \cdots C_s$ is $C_s = C[i,r]$, columns $i$ and $r$ must be identical in this matrix, and so $v_s$ is also equal to the first $n$ rows of column $i$. If $i \leq n$ then $v_s$ is equal to column $i$ of $M'$. Otherwise $n < i \leq 2n$ and since $v_s$ is non-zero it is equal to the $(i-n)$th column of $M'$. Either way, $v_s$ is equal to a column of $M'$ and similar observations show that $v_t$ is also equal to a column of $M'$. Now $v_s \prec v_t$ contradicts the maximal column condition on $M'$, and this proves the claim.
\end{claimproof}
\medskip

It follows from the claim that right-multiplying $A$ by the product
$C_1 \cdots C_q$ copies at most $k$ of the columns of $M'$
into the columns $2n+1, \dots, 2n+k$. But since the result of this right multiplication
is to place $1$s in the first $n$ rows of these columns, it must be that
$\underline{1}$ is a maximum of at most $k$ columns of $M'$. It follows
from our observations that $H$ admits a dominating set of size at most $k$.\end{proof}

\section{NP-Completeness of the Gossip $\mathcal{J}$-order Problem}

To show GJP is NP-hard we will give a polynomial time reduction from
GTP. This reduction uses the following lemma, which allows us to ``nest'' an arbitrary
boolean matrix inside a gossip matrix with polynomially larger size:

\begin{lemma}\label{boolingoss}
Let $n\geq 2$. For any $A \in \mathbb{B}_n$, the following matrix lies in $G_{n(n+1)}$:
\begin{center}
\begin{tikzpicture}[
style1/.style={
  matrix of math nodes,
  every node/.append style={text width=#1,align=center,minimum height=5ex},
  nodes in empty cells,
  left delimiter=[,
  right delimiter=],
  },
style2/.style={
  matrix of math nodes,
  every node/.append style={text width=#1,align=center,minimum height=5ex},
  nodes in empty cells,
  left delimiter=\lbrace,
  right delimiter=\rbrace,
  }
]
\matrix[style1=0.65cm] (1mat)
{
  & & \\
  & & \\
  & & \\
};
\draw[dashed]
  (1mat-1-1.south west) -- (1mat-1-3.south east);
\draw[dashed]
  (1mat-1-2.north east) -- (1mat-3-2.south east);
\node[font=\Large]
  at (1mat-1-2.west) {$1$};
\node[font=\Large]
  at (1mat-1-3) {$A$};
\node[font=\Large]
  at (1mat-3-2.north west) {$1$};
\node[font=\Large]
  at (1mat-3-3.north) {$1$};
\draw[decoration={brace,raise=7pt},decorate]
  (1mat-1-1.north west) --
  node[above=8pt] {$n^{2}$}
  (1mat-1-2.north east);
\draw[decoration={brace,raise=7pt},decorate]
  (1mat-1-3.north west) --
  node[above=8pt] {$n$}
  (1mat-1-3.north east);
\draw[decoration={brace,raise=12pt},decorate]
  (1mat-1-3.north east) --
  node[right=15pt] {$n$}
  (1mat-1-3.south east);
\draw[decoration={brace,raise=12pt},decorate]
  (1mat-2-3.north east) --
  node[right=15pt] {$n^{2}$}
  (1mat-3-3.south east);
\node[left=15pt] at (1mat-2-1.west) {$X = $};

\end{tikzpicture}
\end{center}
\end{lemma}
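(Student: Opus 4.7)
The plan is to express $X$ as the image of the identity $I_{n(n+1)}$ under a carefully chosen sequence of right-multiplications by call and conference call matrices. Since right-multiplication by $C[i,j]$ replaces columns $i$ and $j$ by their element-wise maximum, I identify the $k$th column of a matrix with its support, as a subset of $\{1, \ldots, n(n+1)\}$. Writing $T = \{n+1, \ldots, n(n+1)\}$ for the bottom rows and $S_j = \{i : A_{i,j} = 1\} \subseteq \{1, \ldots, n\}$, the target is: column $k$ of $X$ should have support $\{1, \ldots, n(n+1)\}$ for $k \leq n^2$, and $S_j \cup T$ for $k = n^2 + j$.

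I would proceed in four phases. \emph{Phase 0} applies the conference call $C[\{n+1, \ldots, n(n+1)\}]$, so that every column indexed in $T$ acquires support $T$ while columns $1, \ldots, n$ are unchanged. Now set $J = \{j : S_j = \{1, \ldots, n\}\}$; for $j \notin J$ we have $|S_j| \leq n - 1$, so the total number of pairs $(i,j)$ with $j \notin J$ and $i \in S_j$ is at most $n(n-1) = n^2 - n$, matching the number of ``intermediate'' columns $\{n+1, \ldots, n^2\}$. Fix an injection assigning each such pair to a distinct intermediary $\mathrm{int}(i,j)$. \emph{Phase 1} applies $C[\mathrm{int}(i,j), i]$ for every pair; a short induction shows each intermediary ends this phase with support exactly $\{i\} \cup T$. \emph{Phase 2} applies, for each $j \notin J$, the conference call $C[\{n^2+j\} \cup \{\mathrm{int}(i,j) : i \in S_j\}]$; the intermediaries for distinct $j$'s are disjoint, so these conferences do not interact, and each one sets column $n^2+j$ to the desired support $S_j \cup T$. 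Finally, \emph{Phase 3} applies $C[\{1, \ldots, n^2\} \cup \{n^2+j : j \in J\}]$; the union of supports of these columns contains every singleton $\{i\}$ for $i \leq n$ together with $T$, so this conference makes each such column equal to $\underline{1}$, while the columns $n^2+j$ with $j \notin J$ are untouched and retain their Phase 2 content.

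The main obstacle is ensuring that no unwanted top-row scandal ends up in any column $n^2+j$. A naive approach of calling $C[n^2+j, i]$ directly would contaminate column $i$ with the top-scandals already present in $n^2+j$, and these could then propagate to later columns $n^2+j'$ with $i \in S_{j'}$, introducing top scandals outside $S_{j'}$. Phase 1 sidesteps this by creating, for each pair $(i,j)$ with $j \notin J$ and $i \in S_j$, a private copy of column $i$ whose top-support is exactly $\{i\}$. Because each intermediary is used only in the single Phase 2 conference for its $j$, cross-contamination is impossible. The arithmetic bound $|S_j| \leq n-1$ for $j \notin J$ is precisely what guarantees enough intermediaries are available, and the separation of the $j \in J$ case (by absorbing those columns into the Phase 3 conference) is what makes the bound accessible.
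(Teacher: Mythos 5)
Your construction is correct and is essentially the paper's own proof viewed as a sequence of column operations on the identity: the paper factors $X = X_1X_2X_3X_4$ where $X_1 = C[\{n+1,\dots,n(n+1)\}]$ is your Phase 0, $X_2 = \prod_{i=1}^n C[\{i+n(j-1) : 1\le j\le n\}]$ plays the role of your Phase 1 (producing, for every pair $(i,j)$, an intermediary column $i+n(j-1)$ with support $\{i\}\cup T$), $X_3 = \prod_{j=1}^n C[\{i+n(j-1) : a_{i,j}=1\}\cup\{n^2+j\}]$ is your Phase 2, and $X_4 = C[\{1,\dots,n^2\}]$ is your Phase 3. The only substantive difference is that the paper's explicit injection $(i,j)\mapsto i+n(j-1)$ is defined on all $n^2$ pairs (columns $1,\dots,n$ themselves serve as the intermediaries for $j=1$), which removes the need for your pigeonhole bound $|S_j|\le n-1$ and for the separate treatment of the all-ones columns $j\in J$ in the final conference.
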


\begin{proof}
We shall show that $X$ can be written as $X = X_1 X_2 X_3 X_4$, where the matrices $X_i$ are defined in terms of conference call matrices as follows
\begin{eqnarray*}
X_1 = C[\{n+1, \ldots, n(n+1)\}],\qquad\;\;\;\;\;
&& X_4 = C[\{1, \ldots, n^2\}],\\
X_2 = \prod_{i = 1}^n C[\{i + n(j-1) : 1 \leq j \leq n\}],
&& X_3 = \prod_{j = 1}^n C[\{i + n(j-1) : a_{i,j} = 1\} \cup \{n^2 + j\}], \qquad
\end{eqnarray*}
and where the products taken in $X_2$ and $X_3$ can be in any order.

Each matrix in the product $X_2$ is of the form $C[\{i + n(j-1) : 1 \leq j \leq n\}]$ for some $i$. This matrix has a $1$ in the $a,b$ position if and only if either $a=b$, or $a,b \leq n^2$ and $a$ and $b$ are both congruent to $i$ modulo $n$. The product $X_2$ therefore has a $1$ in the $a,b$ position if and only if either $a=b$, or $a,b \leq n^2$ and $a$ and $b$ are congruent modulo $n$. The result is that the top-left $n^2 \times n^2$ block of $X_2$ consists of an $n \times n$ array of copies of $I_n$:
\begin{center}
\begin{tikzpicture}[
style1/.style={
  matrix of math nodes,
  every node/.append style={text width=#1,align=center,minimum height=5ex},
  nodes in empty cells,
  left delimiter=[,
  right delimiter=],
  },
style2/.style={
  matrix of math nodes,
  every node/.append style={text width=#1,align=center,minimum height=5ex},
  nodes in empty cells,
  left delimiter=\lbrace,
  right delimiter=\rbrace,
  }
]
\matrix[style1=0.65cm] (1mat)
{
  & & & \\
  & & & \\
  & & & \\
  & & & \\
};
\draw[dashed]
  (1mat-1-1.south west) -- (1mat-1-3.south east);
\draw[dashed]
  (1mat-3-1.south west) -- (1mat-3-4.south east);
\draw[dashed]
  (1mat-1-1.north east) -- (1mat-3-1.south east);
\draw[dashed]
  (1mat-1-3.north east) -- (1mat-4-3.south east);
\draw[dashed]
  (1mat-1-2.north east) -- (1mat-3-2.south east);
\draw[dashed]
  (1mat-2-1.south west) -- (1mat-2-3.south east);
\node[font=\Large]
  at (1mat-1-1) {$I_n$};
\node[font=\Large]
  at (1mat-1-2) {\ldots};
\node[font=\Large]
  at (1mat-1-3) {$I_n$};
\node[font=\Large]
  at (1mat-2-1) {$\vdots$};
\node[font=\Large]
  at (1mat-2-2) {$\ddots$};
\node[font=\Large]
  at (1mat-2-3) {$\vdots$};
\node[font=\Large]
  at (1mat-2-4) {$0$};
\node[font=\Large]
  at (1mat-3-1) {$I_n$};
\node[font=\Large]
  at (1mat-3-2) {$\ldots$};
\node[font=\Large]
  at (1mat-3-3) {$I_n$};
\node[font=\Large]
  at (1mat-4-2) {$0$};
\node[font=\Large]
  at (1mat-4-4) {$I_n$};
\draw[decoration={brace,raise=7pt},decorate]
  (1mat-1-1.north west) --
  node[above=8pt] {$n^2$}
  (1mat-1-3.north east);
\draw[decoration={brace,raise=7pt},decorate]
  (1mat-1-4.north west) --
  node[above=8pt] {$n$}
  (1mat-1-4.north east);
\draw[decoration={brace,raise=12pt},decorate]
  (1mat-1-4.north east) --
  node[right=15pt] {$n^2$}
  (1mat-3-4.south east);
\draw[decoration={brace,raise=12pt},decorate]
  (1mat-4-4.north east) --
  node[right=15pt] {$n$}
  (1mat-4-4.south east);
\node[left=15pt] at (1mat-3-1.north west) {$X_2 = $};
\node[right=30pt] at (1mat-3-4.north east) {$.$};

\end{tikzpicture}
\end{center}

For $1 \leq j \leq n$ let $S_j = \{i : a_{i,j} = 1\} \subseteq \{1, \ldots, n\}$ and let $A_{[j]}$ be the matrix which is the same as $A$ on column $j$ but zero elsewhere. For each $j$ we have
\begin{center}
\begin{tikzpicture}[
style1/.style={
  matrix of math nodes,
  every node/.append style={text width=#1,align=center,minimum height=6ex},
  nodes in empty cells,
  left delimiter=[,
  right delimiter=],
  },
style2/.style={
  matrix of math nodes,
  every node/.append style={text width=#1,align=center,minimum height=5ex},
  nodes in empty cells,
  left delimiter=\lbrace,
  right delimiter=\rbrace,
  }
]
\matrix[style1=1cm] (1mat)
{
  & & & \\
  & & & \\
  & & & \\
  & & & \\
};
\draw[dashed]
  (1mat-1-1.south west) -- (1mat-1-2.south east);
\draw[dashed]
  (1mat-1-4.south west) -- (1mat-1-4.south east);
\draw[dashed]
  (1mat-2-2.south west) -- (1mat-2-4.south east);
\draw[dashed]
  (1mat-3-1.south west) -- (1mat-3-4.south east);
\draw[dashed]
  (1mat-1-1.north east) -- (1mat-2-1.south east);
\draw[dashed]
  (1mat-4-1.north east) -- (1mat-4-1.south east);
\draw[dashed]
  (1mat-2-2.north east) -- (1mat-4-2.south east);
\draw[dashed]
  (1mat-1-3.north east) -- (1mat-4-3.south east);

\node
  at (1mat-1-1) {$I_{n(j-1)}$};
\node[font=\Large]
  at (1mat-1-3) {$0$};
\node[font=\large]
  at (1mat-1-4) {$0$};
\node
  at (1mat-2-2) {$C[S_j]$};
\node
  at (1mat-2-4) {$A_{[j]}$};
\node[font=\Large]
  at (1mat-3-1) {$0$};
\node
  at (1mat-3-3) {$\;I_{n(n-j)}$};
\node[font=\large]
  at (1mat-3-4) {$0$};
\node[font=\large]
  at (1mat-4-1) {$0$};
\node
  at (1mat-4-2) {$(A_{[j]})^T$};
\node[font=\large]
  at (1mat-4-3) {$0$};
\node
  at (1mat-4-4) {$I_n$};
\draw[decoration={brace,raise=7pt},decorate]
  (1mat-1-1.north west) --
  node[above=8pt] {$n(j-1)$}
  (1mat-1-1.north east);
\draw[decoration={brace,raise=7pt},decorate]
  (1mat-1-2.north west) --
  node[above=8pt] {$n$}
  (1mat-1-2.north east);
\draw[decoration={brace,raise=7pt},decorate]
  (1mat-1-3.north west) --
  node[above=8pt] {$n(n-j)$}
  (1mat-1-3.north east);
\draw[decoration={brace,raise=7pt},decorate]
  (1mat-1-4.north west) --
  node[above=8pt] {$n$}
  (1mat-1-4.north east);
\draw[decoration={brace,raise=12pt},decorate]
  (1mat-1-4.north east) --
  node[right=15pt] {$n(j-1)$}
  (1mat-1-4.south east);
\draw[decoration={brace,raise=12pt},decorate]
  (1mat-2-4.north east) --
  node[right=15pt] {$n$}
  (1mat-2-4.south east);
\draw[decoration={brace,raise=12pt},decorate]
  (1mat-3-4.north east) --
  node[right=15pt] {$n(n-j)$}
  (1mat-3-4.south east);
\draw[decoration={brace,raise=12pt},decorate]
  (1mat-4-4.north east) --
  node[right=15pt] {$n$}
  (1mat-4-4.south east);
\node[left=15pt] at (1mat-3-1.north west) {$C[\{n(j-1) + i : i \in S_j\} \cup \{n^2 + j\}] = $};
\node[right=55pt] at (1mat-3-4.north east) {$.$};

\end{tikzpicture}
\end{center}
Notice that, as $j$ varies, the corresponding conference call matrices are between disjoint sets of nodes. The product of all such matrices, as $j$ ranges between $1$ and $n$, is
therefore
\begin{center}
\begin{tikzpicture}[
style1/.style={
  matrix of math nodes,
  every node/.append style={text width=#1,align=center,minimum height=5.5ex},
  nodes in empty cells,
  left delimiter=[,
  right delimiter=],
  },
style2/.style={
  matrix of math nodes,
  every node/.append style={text width=#1,align=center,minimum height=5ex},
  nodes in empty cells,
  left delimiter=\lbrace,
  right delimiter=\rbrace,
  }
]
\matrix[style1=1cm] (1mat)
{
  & & & & \\
  & & & & \\
  & & & & \\
  & & & & \\
  & & & & \\
};
\draw[dashed]
  (1mat-1-1.south west) -- (1mat-1-2.south east);
\draw[dashed]
  (1mat-1-5.south west) -- (1mat-1-5.south east);
\draw[dashed]
  (1mat-2-2.south west) -- (1mat-2-3.south east);
\draw[dashed]
  (1mat-2-5.south west) -- (1mat-2-5.south east);
\draw[dashed]
  (1mat-3-3.south west) -- (1mat-3-5.south east);
\draw[dashed]
  (1mat-4-1.south west) -- (1mat-4-5.south east);
\draw[dashed]
  (1mat-1-1.north east) -- (1mat-2-1.south east);
\draw[dashed]
  (1mat-5-1.north east) -- (1mat-5-1.south east);
\draw[dashed]
  (1mat-2-2.north east) -- (1mat-3-2.south east);
\draw[dashed]
  (1mat-5-2.north east) -- (1mat-5-2.south east);
\draw[dashed]
  (1mat-3-3.north east) -- (1mat-5-3.south east);
\draw[dashed]
  (1mat-1-4.north east) -- (1mat-5-4.south east);

\node
  at (1mat-1-1) {$C[S_1]$};
\node[font=\Large]
  at (1mat-1-3.south east) {$0$};
\node
  at (1mat-1-5) {$A_{[1]}$};
\node
  at (1mat-2-2) {$C[S_2]$};
\node
  at (1mat-2-5) {$A_{[2]}$};
\node[font=\Large]
  at (1mat-3-1.south east) {$0$};
\node[font=\Large]
  at (1mat-3-3) {$\ddots$};
\node[font=\large]
  at (1mat-3-5) {$\vdots$};
\node
  at (1mat-4-4) {$C[S_n]$};
\node
  at (1mat-4-5) {$A_{[n]}$};
\node
  at (1mat-5-1) {$(A_{[1]})^T$};
\node
  at (1mat-5-2) {$(A_{[2]})^T$};
\node[font=\Large]
  at (1mat-5-3) {$\ldots$};
\node
  at (1mat-5-4) {$(A_{[n]})^T$};
\node[font=\Large]
  at (1mat-5-5) {$I_n$};
\draw[decoration={brace,raise=7pt},decorate]
  (1mat-1-1.north west) --
  node[above=8pt] {$n^2$}
  (1mat-1-4.north east);
\draw[decoration={brace,raise=7pt},decorate]
  (1mat-1-5.north west) --
  node[above=8pt] {$n$}
  (1mat-1-5.north east);
\draw[decoration={brace,raise=12pt},decorate]
  (1mat-1-5.north east) --
  node[right=15pt] {$n^2$}
  (1mat-4-5.south east);
\draw[decoration={brace,raise=12pt},decorate]
  (1mat-5-5.north east) --
  node[right=15pt] {$n$}
  (1mat-5-5.south east);
\node[left=15pt] at (1mat-3-1.west) {$X_3 = $};
\node[right=30pt] at (1mat-3-5.east) {.};

\end{tikzpicture}
\end{center}

Consider the effect of multiplying $X_3$ on the left by $X_2$. This leaves the last $n$ rows unchanged. The remaining $n^2$ rows can be split into $n$ blocks of $n$ rows each. The structure of the identity matrices in $X_2$ means that in $X_2 X_3$ these blocks are all identical to each other and equal to the element-wise maximum of the $n$ corresponding blocks in $X_3$. Since the element-wise maximum of $A_{[1]}$ to $A_{[n]}$ is $A$, we get

\begin{center}
\begin{tikzpicture}[
style1/.style={
  matrix of math nodes,
  every node/.append style={text width=#1,align=center,minimum height=5ex},
  nodes in empty cells,
  left delimiter=[,
  right delimiter=],
  },
style2/.style={
  matrix of math nodes,
  every node/.append style={text width=#1,align=center,minimum height=5ex},
  nodes in empty cells,
  left delimiter=\lbrace,
  right delimiter=\rbrace,
  }
]
\matrix[style1=1cm] (1mat)
{
  & & & & \\
  & & & & \\
  & & & & \\
  & & & & \\
  & & & & \\
};
\draw[dashed]
  (1mat-1-1.south west) -- (1mat-1-5.south east);
\draw[dashed]
  (1mat-2-1.south west) -- (1mat-2-5.south east);
\draw[dashed]
  (1mat-3-1.south west) -- (1mat-3-5.south east);
\draw[dashed]
  (1mat-4-1.south west) -- (1mat-4-5.south east);
\draw[dashed]
  (1mat-1-1.north east) -- (1mat-5-1.south east);
\draw[dashed]
  (1mat-1-2.north east) -- (1mat-5-2.south east);
\draw[dashed]
  (1mat-1-3.north east) -- (1mat-5-3.south east);
\draw[dashed]
  (1mat-1-4.north east) -- (1mat-5-4.south east);

\node
  at (1mat-1-1) {$C[S_1]$};
\node
  at (1mat-1-2) {$C[S_2]$};
\node[font=\Large]
  at (1mat-1-3) {$\ldots$};
\node
  at (1mat-1-4) {$C[S_n]$};
\node[font=\large]
  at (1mat-1-5) {$A$};
\node
  at (1mat-2-1) {$C[S_1]$};
\node
  at (1mat-2-2) {$C[S_2]$};
\node
  at (1mat-2-4) {$C[S_n]$};
\node[font=\large]
  at (1mat-2-5) {$A$};
\node[font=\Large]
  at (1mat-3-1) {$\vdots$};
\node[font=\Large]
  at (1mat-3-3) {$\ddots$};
\node[font=\large]
  at (1mat-3-5) {$\vdots$};
\node
  at (1mat-4-1) {$C[S_1]$};
\node
  at (1mat-4-2) {$C[S_2]$};
\node
  at (1mat-4-4) {$C[S_n]$};
\node[font=\large]
  at (1mat-4-5) {$A$};
\node
  at (1mat-5-1) {$(A_{[1]})^T$};
\node
  at (1mat-5-2) {$(A_{[2]})^T$};
\node[font=\Large]
  at (1mat-5-3) {$\ldots$};
\node
  at (1mat-5-4) {$(A_{[n]})^T$};
\node[font=\Large]
  at (1mat-5-5) {$I_n$};
\draw[decoration={brace,raise=7pt},decorate]
  (1mat-1-1.north west) --
  node[above=8pt] {$n^2$}
  (1mat-1-4.north east);
\draw[decoration={brace,raise=7pt},decorate]
  (1mat-1-5.north west) --
  node[above=8pt] {$n$}
  (1mat-1-5.north east);
\draw[decoration={brace,raise=12pt},decorate]
  (1mat-1-5.north east) --
  node[right=15pt] {$n^2$}
  (1mat-4-5.south east);
\draw[decoration={brace,raise=12pt},decorate]
  (1mat-5-5.north east) --
  node[right=15pt] {$n$}
  (1mat-5-5.south east);
\node[left=15pt] at (1mat-3-1.west) {$X_2 X_3 = $};
\node[right=30pt] at (1mat-3-5.east) {.};

\end{tikzpicture}
\end{center}

Between them, the last $n^2$ rows of $X_2 X_3$ contain a $1$ in each column (each conference call has only $1$s on its diagonal.) Therefore, multiplying on the left by $X_1 = C[\{n+1, \ldots, n(n+1)\}]$ fills each of these rows with $1$s. Now, between them, the first $n^2$ columns of $X_1 X_2 X_3$ contain a $1$ in each row, so multiplying on the right by $X_4 = C[\{1, \ldots, n^2\}]$ fills each of these columns with $1$s. We therefore obtain $X_1X_2X_3X_4=X$, as required.\end{proof}

\begin{theorem}
GJP is NP-complete.
\end{theorem}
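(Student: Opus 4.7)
GJP lies in NP by the discussion in Section~\ref{sec_problems}, so the task is NP-hardness. The plan is to reduce from GTP, which is NP-hard by Theorem~\ref{transthm}. Given a GTP instance $(X_0, Y_0) \in \mathbb{B}_n \times \mathbb{B}_n$, I would apply Lemma~\ref{boolingoss} separately to $X_0$ and $Y_0$ to produce matrices $\tilde X, \tilde Y \in G_{n(n+1)}$ nesting $X_0, Y_0$ respectively in the top-right $n \times n$ block. This is clearly polynomial-time, and the central claim to establish is that $\tilde Y \OrdJ \tilde X$ in $G_{n(n+1)}$ if and only if $X_0 G = Y_0$ for some $G \in G_n$.

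The forward direction is the easy one. Given a witness $G = C[i_1, j_1] \cdots C[i_s, j_s] \in G_n$ with $X_0 G = Y_0$, I shift the indices to form $\hat G = C[n^2 + i_1, n^2 + j_1] \cdots C[n^2 + i_s, n^2 + j_s] \in G_{n(n+1)}$. Since $\hat G$ acts on the last $n$ indices exactly as $G$ acts on $\{1, \ldots, n\}$, a direct block computation should show $\tilde X \hat G = \tilde Y$: the all-ones first $n^2$ columns and bottom $n^2$ rows of $\tilde X$ are preserved, while the top-right $n \times n$ block transforms via $X_0 G = Y_0$. Hence $\tilde Y \OrdR \tilde X$, and in particular $\tilde Y \OrdJ \tilde X$.

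The backward direction is the crux and the main obstacle. Assume $\tilde Y = U \tilde X V$ for some $U, V \in G_{n(n+1)}$. My plan is to exploit the rigid $1$-blocks of $\tilde X$: since its bottom $n^2$ rows are uniformly $1$, any left-merge of a top-$n$ row with a bottom row produces an all-ones row, so each row of $\tilde Y$ containing a zero forces the corresponding row of $U$ to be supported only in the top-$n$ indices, and a dual column argument does the same for $V$. As any cross-block call in a factorization of a gossip matrix would, by monotonicity of gossip, create a $1$ in a forbidden position, $U$ and $V$ must themselves decompose as block-diagonal matrices with blocks in $G_n$ and $G_{n^2}$, and the block product should give the top-right $n \times n$ block of $\tilde Y$ as $U_{\mathrm{top}} X_0 V_{\mathrm{bot}}$ for the respective $G_n$-blocks. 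The final and hardest task is then to upgrade the two-sided relation $Y_0 = U_{\mathrm{top}} X_0 V_{\mathrm{bot}}$ to the one-sided $Y_0 = X_0 G$; I expect to accomplish this either by preprocessing $(X_0, Y_0)$ to include identity-marker rows or columns, forcing any admissible $U_{\mathrm{top}}$ to act trivially on $X_0$, or by a further structural argument absorbing $U_{\mathrm{top}}$ into the right factor. Edge cases in which $Y_0$ has all-ones rows or columns, which weaken the constraints on $U$ and $V$, would be handled along the way.
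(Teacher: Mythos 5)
There is a genuine gap, and it is exactly at the step you defer. The central claim of your reduction --- that for the \emph{plain} nesting of $X_0$ and $Y_0$ via Lemma~\ref{boolingoss} one has $\tilde Y \OrdJ \tilde X$ if and only if $X_0G=Y_0$ for some $G\in G_n$ --- is false, so no amount of later analysis of $U$ and $V$ can rescue the construction as specified. Concretely, take $n=2$, $X_0=\left[\begin{smallmatrix}1&1\\0&0\end{smallmatrix}\right]$ and $Y_0$ the all-ones matrix. Since $G_2=\{I_2,C[1,2]\}$ and $X_0C[1,2]=X_0\neq Y_0$, the GTP instance is negative (note $X_0$ even satisfies the maximal column condition, so restricting to MGTP does not help). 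Yet $\tilde Y$ is the all-ones $6\times 6$ matrix and $C[1,2]\tilde X=\tilde Y$, because row $1$ of $\tilde X$ is all ones; hence $\tilde Y\OrdL\tilde X$ and in particular $\tilde Y\OrdJ\tilde X$. The difficulty is structural: even when $U,V$ are forced to be block diagonal, a two-sided relation only yields $Y_0=U_{\mathrm{top}}X_0V_{\mathrm{bot}}$, and left multiplication genuinely enlarges what is reachable (it merges rows of $X_0$), so the ``upgrade'' to a one-sided relation is the actual content of the proof, not a finishing touch. Moreover your argument for block-diagonality itself leaks in the very edge cases you mention (all-ones rows or columns of $Y_0$), as the example shows.

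The fix you gesture at --- ``identity-marker rows or columns'' --- is precisely what the paper's construction implements, and it must be built in \emph{before} nesting. The paper applies Lemma~\ref{boolingoss} to the augmented $2n\times 2n$ matrices $\left[\begin{smallmatrix}A&I_n\\0&1\end{smallmatrix}\right]$ and $\left[\begin{smallmatrix}B&I_n\\0&1\end{smallmatrix}\right]$, producing $X,Y\in G_{2n(2n+1)}$. In the analysis of $UXV=Y$, the all-ones bottom rows prevent $U$ from merging any of the first $2n$ rows with the rest, the $[\,0\;\;1\,]$ strip prevents merging rows $1,\dots,n$ with rows $n+1,\dots,2n$, and the $I_n$ marker block prevents merging two of the first $n$ rows; hence $U$ acts as the identity on the first $n$ rows, and since $X\preceq UX\preceq UXV=Y$ while $X$ and $Y$ agree on all other rows, one gets $UX=X$ and the problem collapses to the one-sided $XV=Y$. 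A dual column analysis (again powered by the zero blocks created by the augmentation) forces $V$ to be block diagonal with an $n\times n$ block $G\in G_n$ in the position acting on the $A$-columns, giving $AG=B$. So your overall strategy is the right one, but without executing the marker augmentation and the row/column factor analysis, the reduction as proposed is incorrect rather than merely incomplete.
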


\begin{proof}
We have already seen that the problem is in NP. We show the problem is NP-hard by reduction from MGTP. Given matrices $A, B \in \mathbb{B}_n$, let

\begin{center}
\begin{tikzpicture}[
style1/.style={
  matrix of math nodes,
  every node/.append style={text width=#1,align=center,minimum height=5ex},
  nodes in empty cells,
  left delimiter=[,
  right delimiter=],
  },
style2/.style={
  matrix of math nodes,
  every node/.append style={text width=#1,align=center,minimum height=5ex},
  nodes in empty cells,
  left delimiter=\lbrace,
  right delimiter=\rbrace,
  }
]
\matrix[style1=0.65cm] (1mat)
{
  & & & \\
  & & & \\
  & & & \\
  & & & \\
};
\draw[dashed]
  (1mat-2-1.south west) -- (1mat-2-4.south east);
\draw[dashed]
  (1mat-1-2.north east) -- (1mat-4-2.south east);
\draw[dashed]
  (1mat-1-3.south west) -- (1mat-1-4.south east);
\draw[dashed]
  (1mat-1-3.north east) -- (1mat-2-3.south east);
\node[font=\Large]
  at (1mat-1-2.south west) {$1$};
\node[font=\Large]
  at (1mat-1-3) {$A$};
\node[font=\Large]
  at (1mat-1-4) {$I_n$};
\node[font=\Large]
  at (1mat-2-3) {$0$};
\node[font=\Large]
  at (1mat-2-4) {$1$};
\node[font=\Large]
  at (1mat-3-2.south west) {$1$};
\node[font=\Large]
  at (1mat-3-3.south east) {$1$};
\draw[decoration={brace,raise=7pt},decorate]
  (1mat-1-1.north west) --
  node[above=8pt] {$(2n)^2$}
  (1mat-1-2.north east);
\draw[decoration={brace,raise=7pt},decorate]
  (1mat-1-3.north west) --
  node[above=8pt] {$n$}
  (1mat-1-3.north east);
\draw[decoration={brace,raise=7pt},decorate]
  (1mat-1-4.north west) --
  node[above=8pt] {$n$}
  (1mat-1-4.north east);
\draw[decoration={brace,raise=12pt},decorate]
  (1mat-1-4.north east) --
  node[right=15pt] {$n$}
  (1mat-1-4.south east);
\draw[decoration={brace,raise=12pt},decorate]
  (1mat-2-4.north east) --
  node[right=15pt] {$n$}
  (1mat-2-4.south east);
\draw[decoration={brace,raise=12pt},decorate]
  (1mat-3-4.north east) --
  node[right=15pt] {$(2n)^2$}
  (1mat-4-4.south east);
\node[left=15pt] at (1mat-3-1.north west) {$X = $};

\matrix[style1=0.65cm, right=5.5cm] (2mat)
{
  & & & \\
  & & & \\
  & & & \\
  & & & \\
};
\draw[dashed]
  (2mat-2-1.south west) -- (2mat-2-4.south east);
\draw[dashed]
  (2mat-1-2.north east) -- (2mat-4-2.south east);
\draw[dashed]
  (2mat-1-3.south west) -- (2mat-1-4.south east);
\draw[dashed]
  (2mat-1-3.north east) -- (2mat-2-3.south east);
\node[font=\Large]
  at (2mat-1-2.south west) {$1$};
\node[font=\Large]
  at (2mat-1-3) {$B$};
\node[font=\Large]
  at (2mat-1-4) {$I_n$};
\node[font=\Large]
  at (2mat-2-3) {$0$};
\node[font=\Large]
  at (2mat-2-4) {$1$};
\node[font=\Large]
  at (2mat-3-2.south west) {$1$};
\node[font=\Large]
  at (2mat-3-3.south east) {$1$};
\draw[decoration={brace,raise=7pt},decorate]
  (2mat-1-1.north west) --
  node[above=8pt] {$(2n)^2$}
  (2mat-1-2.north east);
\draw[decoration={brace,raise=7pt},decorate]
  (2mat-1-3.north west) --
  node[above=8pt] {$n$}
  (2mat-1-3.north east);
\draw[decoration={brace,raise=7pt},decorate]
  (2mat-1-4.north west) --
  node[above=8pt] {$n$}
  (2mat-1-4.north east);
\draw[decoration={brace,raise=12pt},decorate]
  (2mat-1-4.north east) --
  node[right=15pt] {$n$}
  (2mat-1-4.south east);
\draw[decoration={brace,raise=12pt},decorate]
  (2mat-2-4.north east) --
  node[right=15pt] {$n$}
  (2mat-2-4.south east);
\draw[decoration={brace,raise=12pt},decorate]
  (2mat-3-4.north east) --
  node[right=15pt] {$(2n)^2$}
  (2mat-4-4.south east);
\node[left=15pt] at (2mat-3-1.north west) {$\mathrm{and} \quad Y = $};
\node[right=30pt] at (2mat-3-4.north east) {.};
\end{tikzpicture}
\end{center}
These can clearly be constructed in polynomial time, and by Lemma \ref{boolingoss} we know that $X,Y \in G_{2n(2n+1)}$.

If there exists $G \in G_n$ such that $AG = B$ then a simple calculation shows that $UXV = Y$ (and hence $Y \leq_{\mathcal{J}} X$) where

\begin{center}
\begin{tikzpicture}[
style1/.style={
  matrix of math nodes,
  every node/.append style={text width=#1,align=center,minimum height=5ex},
  nodes in empty cells,
  left delimiter=[,
  right delimiter=],
  },
style2/.style={
  matrix of math nodes,
  every node/.append style={text width=#1,align=center,minimum height=5ex},
  nodes in empty cells,
  left delimiter=\lbrace,
  right delimiter=\rbrace,
  }
]
\matrix[style1=0.65cm] (1mat)
{
  & & & \\
  & & & \\
  & & & \\
  & & & \\
};
\draw[dashed]
  (1mat-2-1.south west) -- (1mat-2-4.south east);
\draw[dashed]
  (1mat-1-2.north east) -- (1mat-4-2.south east);
\draw[dashed]
  (1mat-3-3.south west) -- (1mat-3-4.south east);
\draw[dashed]
  (1mat-3-3.north east) -- (1mat-4-3.south east);
\node[font=\Large]
  at (1mat-1-1.south east) {$I_{(2n)^2}$};
\node[font=\Large]
  at (1mat-1-3.south east) {$0$};
\node[font=\Large]
  at (1mat-3-1.south east) {$0$};
\node[font=\Large]
  at (1mat-3-3) {$G$};
\node[font=\Large]
  at (1mat-3-4) {$0$};
\node[font=\Large]
  at (1mat-4-3) {$0$};
\node[font=\Large]
  at (1mat-4-4) {$I_n$};
\draw[decoration={brace,raise=7pt},decorate]
  (1mat-1-1.north west) --
  node[above=8pt] {$(2n)^2$}
  (1mat-1-2.north east);
\draw[decoration={brace,raise=7pt},decorate]
  (1mat-1-3.north west) --
  node[above=8pt] {$n$}
  (1mat-1-3.north east);
\draw[decoration={brace,raise=7pt},decorate]
  (1mat-1-4.north west) --
  node[above=8pt] {$n$}
  (1mat-1-4.north east);
\draw[decoration={brace,raise=12pt},decorate]
  (1mat-1-4.north east) --
  node[right=15pt] {$(2n)^2$}
  (1mat-2-4.south east);
\draw[decoration={brace,raise=12pt},decorate]
  (1mat-3-4.north east) --
  node[right=15pt] {$n$}
  (1mat-3-4.south east);
\draw[decoration={brace,raise=12pt},decorate]
  (1mat-4-4.north east) --
  node[right=15pt] {$n$}
  (1mat-4-4.south east);
\node[left=15pt] at (1mat-3-1.north west) {$U = I_{2n(2n+1)}, \qquad V = $};
\node[right=25pt] at (1mat-3-4.north east) {.};
\end{tikzpicture}
\end{center}

Conversely, if there exist $U,V \in G_{2n(2n+1)}$ such that $UXV = Y$, then we consider what structure the matrices $U$ and $V$ could possibly have by considering them as products of call matrices $C[i,j]$.

Recall that left multiplication of $X$ by $C[i,j]$ replaces rows $i$ and $j$ of $X$ with their (element-wise) maximum, whilst right multiplication by $C[i,j]$ replaces columns $i$ and $j$ of $X$ with their maximum. Thus, by regarding $U$ and $V$ as products of call generators, we see that $Y$ can be built from $X$ by successively replacing either two rows or two columns with their maximum. The last $(2n)^2$ rows of $X$ all contain $1$s in each of the last $2n$ columns, whilst the first $2n$ rows of $Y$ each have a $0$ in one of the last $2n$ columns. It follows that $U$ cannot contain $C[i,j]$ as a factor if $i \leq 2n$ and $j > 2n$. Each of the rows $n+1, \ldots, 2n$ of $X$ contain $1$s in each of the last $n$ columns, whilst the first $n$ rows of $Y$ each have a $0$ in one of the last $n$ columns. Thus $U$ cannot contain $C[i,j]$ as a factor if $i \leq n$ and $n < j \leq 2n$. The factors of $U$ are therefore call matrices $C[i,j]$ such that $i$ and $j$ are either both less than $n$, both between $n$ and $2n$, or both greater than $2n$. Both $X$ and $Y$ contain the $n \times n$ identity matrix in the top right corner. It follows that $U$ cannot contain $C[i,j]$ as a factor if $i,j \leq n$ are distinct, as this would result in an off-diagonal zero in this $n \times n$ submatrix of $Y$. Therefore, for some $D \in G_n, E \in G_{(2n)^2}$ we must have

\begin{center}
\begin{tikzpicture}[
style1/.style={
  matrix of math nodes,
  every node/.append style={text width=#1,align=center,minimum height=5ex},
  nodes in empty cells,
  left delimiter=[,
  right delimiter=],
  },
style2/.style={
  matrix of math nodes,
  every node/.append style={text width=#1,align=center,minimum height=5ex},
  nodes in empty cells,
  left delimiter=\lbrace,
  right delimiter=\rbrace,
  }
]
\matrix[style1=0.65cm] (1mat)
{
  & & & \\
  & & & \\
  & & & \\
  & & & \\
};
\draw[dashed]
  (1mat-2-1.south west) -- (1mat-2-4.south east);
\draw[dashed]
  (1mat-1-2.north east) -- (1mat-4-2.south east);
\draw[dashed]
  (1mat-1-1.south west) -- (1mat-1-2.south east);
\draw[dashed]
  (1mat-1-1.north east) -- (1mat-2-1.south east);
\node[font=\Large]
  at (1mat-1-1) {$I_n$};
\node[font=\Large]
  at (1mat-1-2) {$0$};
\node[font=\Large]
  at (1mat-2-1) {$0$};
\node[font=\Large]
  at (1mat-2-2) {$D$};
\node[font=\Large]
  at (1mat-1-3.south east) {$0$};
\node[font=\Large]
  at (1mat-3-1.south east) {$0$};
\node[font=\Large]
  at (1mat-3-3.south east) {$E$};
\draw[decoration={brace,raise=7pt},decorate]
  (1mat-1-1.north west) --
  node[above=8pt] {$n$}
  (1mat-1-1.north east);
\draw[decoration={brace,raise=7pt},decorate]
  (1mat-1-2.north west) --
  node[above=8pt] {$n$}
  (1mat-1-2.north east);
\draw[decoration={brace,raise=7pt},decorate]
  (1mat-1-3.north west) --
  node[above=8pt] {$(2n)^2$}
  (1mat-1-4.north east);
\draw[decoration={brace,raise=12pt},decorate]
  (1mat-1-4.north east) --
  node[right=15pt] {$n$}
  (1mat-1-4.south east);
\draw[decoration={brace,raise=12pt},decorate]
  (1mat-2-4.north east) --
  node[right=15pt] {$n$}
  (1mat-2-4.south east);
\draw[decoration={brace,raise=12pt},decorate]
  (1mat-3-4.north east) --
  node[right=15pt] {$(2n)^2$}
  (1mat-4-4.south east);
\node[left=15pt] at (1mat-3-1.north west) {$U = $};
\node[right=25pt] at (1mat-3-4.north east) {.};
\end{tikzpicture}
\end{center}
It follows that $X$ and $UX$ are identical in the first $n$ rows. But we have $X \preceq UX \preceq UXY = Y$ and $X$ and $Y$ are identical in all the other rows, so it must be
that $X = UX$, and $XV = UXV = Y$.

Consider now the action of $V$ by right multiplication on $X$. The first $(2n)^2$ columns of $X$ all contain $1$s in each of the first $2n$ rows, whilst the last $2n$ columns of $Y$ each have a $0$ in one of the first $2n$ rows, and so $V$ cannot contain $C[i,j]$ as a factor if $i \leq (2n)^2$ and $j > (2n)^2$. Each of the last $n$ columns of $X$ contains a $1$ in row $n+1$, whilst columns $(2n)^2+1, \ldots, (2n)^2+n$ of $Y$ each contain a $0$ in row $n+1$, and so $V$ cannot contain $C[i,j]$ as a factor if $(2n)^2 < i \leq (2n)^2 + n$ and $j > (2n)^2 + n$. It now follows that there must be some $F \in G_{(2n)^2}, G, H \in G_n$ such that

\begin{center}
\begin{tikzpicture}[
style1/.style={
  matrix of math nodes,
  every node/.append style={text width=#1,align=center,minimum height=5ex},
  nodes in empty cells,
  left delimiter=[,
  right delimiter=],
  },
style2/.style={
  matrix of math nodes,
  every node/.append style={text width=#1,align=center,minimum height=5ex},
  nodes in empty cells,
  left delimiter=\lbrace,
  right delimiter=\rbrace,
  }
]
\matrix[style1=0.65cm, right = 4.5cm] (2mat)
{
  & & & \\
  & & & \\
  & & & \\
  & & & \\
};
\draw[dashed]
  (2mat-2-1.south west) -- (2mat-2-4.south east);
\draw[dashed]
  (2mat-1-2.north east) -- (2mat-4-2.south east);
\draw[dashed]
  (2mat-3-3.south west) -- (2mat-3-4.south east);
\draw[dashed]
  (2mat-3-3.north east) -- (2mat-4-3.south east);
\node[font=\Large]
  at (2mat-1-1.south east) {$F$};
\node[font=\Large]
  at (2mat-1-3.south east) {$0$};
\node[font=\Large]
  at (2mat-3-1.south east) {$0$};
\node[font=\Large]
  at (2mat-3-3) {$G$};
\node[font=\Large]
  at (2mat-3-4) {$0$};
\node[font=\Large]
  at (2mat-4-3) {$0$};
\node[font=\Large]
  at (2mat-4-4) {$H$};
\draw[decoration={brace,raise=7pt},decorate]
  (2mat-1-1.north west) --
  node[above=8pt] {$(2n)^2$}
  (2mat-1-2.north east);
\draw[decoration={brace,raise=7pt},decorate]
  (2mat-1-3.north west) --
  node[above=8pt] {$n$}
  (2mat-1-3.north east);
\draw[decoration={brace,raise=7pt},decorate]
  (2mat-1-4.north west) --
  node[above=8pt] {$n$}
  (2mat-1-4.north east);
\draw[decoration={brace,raise=12pt},decorate]
  (2mat-1-4.north east) --
  node[right=15pt] {$(2n)^2$}
  (2mat-2-4.south east);
\draw[decoration={brace,raise=12pt},decorate]
  (2mat-3-4.north east) --
  node[right=15pt] {$n$}
  (2mat-3-4.south east);
\draw[decoration={brace,raise=12pt},decorate]
  (2mat-4-4.north east) --
  node[right=15pt] {$n$}
  (2mat-4-4.south east);
\node[left=15pt] at (2mat-3-1.north west) {$V = $};
\node[right=25pt] at (2mat-3-4.north east) {,};
\end{tikzpicture}
\end{center}

and then $XV=Y$ tells us that $AG=B$ as required.\end{proof}

\section{NP-Completeness of the Gossip Membership Problem}

In this section we prove that the Gossip Membership Problem is NP-complete.

\begin{theorem}
\label{mainthm}
GMP is NP-complete.
\end{theorem}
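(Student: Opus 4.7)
The plan is to give a polynomial-time reduction from MGTP to GMP. Given an instance $(X,Y) \in \mathbb{B}_n \times \mathbb{B}_n$ of MGTP (so $X$ satisfies the maximal column condition), I would construct in polynomial time a boolean matrix $Z \in \mathbb{B}_N$ for some $N$ polynomial in $n$, with the property that $Z \in G_N$ if and only if there exists $G \in G_n$ with $XG = Y$. Together with the fact that GMP is in NP (already noted in Section~\ref{sec_problems}), this will establish the theorem.

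The matrix $Z$ will be a block matrix containing $X$ and $Y$ in designated positions, surrounded by all-ones blocks, identity blocks, and zero blocks chosen to play two complementary roles. The all-ones blocks are what allow $Z$ to be realized in the gossip monoid at all: as exploited in Lemma~\ref{boolingoss}, conference calls produce such regions in abundance. The identity and zero blocks, by contrast, serve as rigid ``walls'' that restrict the ways a product of call matrices can act, since the monotonicity of both left and right multiplication by call matrices ($K \preceq C[i,j]K$ and $K \preceq KC[i,j]$) means that any zero entry in $Z$ rules out many calls from appearing in a factorization. The exact block pattern must be chosen so that these walls separate the $X$-block from the $Y$-block except along a small ``interface'' of columns on which the hoped-for gossip element $G$ acts.

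The forward direction (existence of $G$ implies $Z \in G_N$) is then constructive in spirit: one exhibits a sequence of calls that first forms a nested copy of $X$ inside $Z$ along the lines of Lemma~\ref{boolingoss}, then applies a translated copy of $G$ to transform the $X$-block into a $Y$-block in the intended position, and finally fills the remaining all-ones padding via conference calls acting on carefully chosen index sets that cannot disturb the $X$- or $Y$-blocks. The reverse direction is where the real difficulty lies. Given $Z \in G_N$, I would fix an arbitrary factorization as a product of call matrices and extract from it a $G \in G_n$ with $XG = Y$. This requires a rigidity analysis analogous to, but more intricate than, the claim proved inside Theorem~\ref{transthm} and the block analysis carried out for GJP: the zero and identity blocks must be shown to rule out nearly all call matrices whose indices cross the boundaries between the various blocks, and the maximal column condition on $X$ must force the remaining calls that touch the $X$-columns to act exactly as a genuine gossip transformation of $X$, necessarily producing $Y$ in the $Y$-block. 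The principal technical obstacle is designing $Z$ so that this tight rigidity is forced while simultaneously leaving enough flexibility for a call-matrix factorization to exist in the positive case; I would expect to need several auxiliary rows and columns carrying distinctive patterns (such as identity blocks next to strategically placed $0$s) whose sole purpose is to make unintended call interactions produce detectable inconsistencies in $Z$.
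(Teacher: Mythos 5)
Your overall route is the same as the paper's: reduce from MGTP, embed the instance $(X,Y)$ in a larger block matrix whose all-ones regions (obtainable via conference calls, as in Lemma~\ref{boolingoss}) make membership possible in the positive case, and whose zero/identity ``walls'' force rigidity in the negative direction. But as written this is a plan, not a proof: the two things that constitute the entire content of the argument --- the concrete gadget and the verification that it is rigid --- are exactly the parts you defer (``the exact block pattern must be chosen so that\dots'', ``I would expect to need several auxiliary rows and columns\dots''). Saying that the maximal column condition ``must force the remaining calls to act exactly as a genuine gossip transformation of $X$'' is a restatement of what has to be proved, not an argument; nothing in the proposal explains \emph{how} an arbitrary call-matrix factorization of $Z$ gets converted into an element $G \in G_n$ with $XG=Y$, which is where all the difficulty sits.

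For comparison, the paper's gadget is a matrix in $\mathbb{B}_{n(n+4)}$ whose index set is split into blocks $a$ (size $n^2$) and $b,c,d,e$ (size $n$ each), with first block row $(1,\,A,\,A,\,0,\,B)$ sitting above a pattern of identity and ones blocks. The reverse direction then proceeds through a chain of claims you would need analogues of: (i) the only calls joining the $\{a,b\}$ side to the $\{c,d,e\}$ side are the $n$ calls $C[b_k,e_k]$, and each occurs \emph{exactly once} in any non-redundant factorization (proved by a commutation/idempotency argument, not just monotonicity); (ii) the call $C[b_k,e_k]$ copies column $k$ of $A$ into the $B$-position, that column having been zero beforehand; (iii) every column appearing in the relevant blocks is a maximum of columns of $A$ or zero; (iv) crucially, any \emph{later} call enlarging a column of the $B$-block must be of the form $C[e_j,e_k]$ --- here the second, duplicate copy of $A$ (block $c$) is what makes the maximal column condition bite, since a call $C[c_j,e_k]$ would force one column of $A$ to strictly dominate another; (v) replacing each such $C[e_i,e_j]$ by $C[i,j]$ and tracking columns inductively yields $G$ with $AG=B$. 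None of these mechanisms (the exactly-once occurrence of the bridging calls, the deliberate duplication of $A$ to exploit maximality, the projection of the $e$-block calls down to $G_n$) is identified in your proposal, so the reduction is not yet established.
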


\begin{proof}
We have already seen that GMP is in NP. We show NP-hardness with a polynomial time reduction from MGTP. Let $A, B \in \mathbb{B}_n$ with $A$ satisfying the maximal column condition. Specifically, we show that for each instance $(A,B)$ of the Maximal Gossip Transformation Problem, there exists a Boolean matrix $C \in B_{n(n+4)}$, which can be constructed in polynomial time, for which the problem of deciding membership in the gossip monoid is equivalent to the decision problem MGTP for the pair $(A,B)$. If $n = 1$ then we simply set $C = [1]$ if $A = B$ and $C = [0]$ if $A \neq B$.  Suppose, then, that $n \geq 2$. Let $C$ be the $n(n + 4) \times n(n + 4)$ matrix

\begin{center}
\begin{tikzpicture}[
style1/.style={
  matrix of math nodes,
  every node/.append style={text width=#1,align=center,minimum height=5ex},
  nodes in empty cells,
  left delimiter=[,
  right delimiter=],
  },
style2/.style={
  matrix of math nodes,
  every node/.append style={text width=#1,align=center,minimum height=5ex},
  nodes in empty cells,
  left delimiter=\lbrace,
  right delimiter=\rbrace,
  }
]
\matrix[style1=0.65cm] (1mat)
{
  & & & & & \\
  & & & & & \\
  & & & & & \\
  & & & & & \\
  & & & & & \\
  & & & & & \\
};
\draw[dashed]
  (1mat-1-1.south west) -- (1mat-1-6.south east);
\draw[dashed]
  (1mat-3-1.south west) -- (1mat-3-6.south east);
\draw[dashed]
  (1mat-4-1.south west) -- (1mat-4-6.south east);
\draw[dashed]
  (1mat-5-1.south west) -- (1mat-5-6.south east);
\draw[dashed]
  (1mat-1-2.north east) -- (1mat-6-2.south east);
\draw[dashed]
  (1mat-1-3.north east) -- (1mat-6-3.south east);
\draw[dashed]
  (1mat-1-4.north east) -- (1mat-6-4.south east);
\draw[dashed]
  (1mat-1-5.north east) -- (1mat-6-5.south east);
\node[font=\Large]
  at (1mat-1-2.west) {$1$};
\node[font=\Large]
  at (1mat-1-3) {$A$};
\node[font=\Large]
  at (1mat-1-4) {$A$};
\node[font=\Large]
  at (1mat-1-5) {$0$};
\node[font=\Large]
  at (1mat-1-6) {$B$};
\node[font=\Large]
  at (1mat-3-2.north west) {$1$};
\node[font=\Large]
  at (1mat-3-3.north) {$1$};
\node[font=\Large]
  at (1mat-3-4.north) {$1$};
\node[font=\Large]
  at (1mat-3-5.north) {$0$};
\node[font=\Large]
  at (1mat-3-6.north) {$1$};
\node[font=\Large]
  at (1mat-4-2.west) {$0$};
\node[font=\Large]
  at (1mat-4-3) {$0$};
\node[font=\Large]
  at (1mat-4-4) {$1$};
\node[font=\Large]
  at (1mat-4-5) {$I_n$};
\node[font=\Large]
  at (1mat-4-6) {$1$};
\node[font=\Large]
  at (1mat-5-2.west) {$0$};
\node[font=\Large]
  at (1mat-5-3) {$I_n$};
\node[font=\Large]
  at (1mat-5-4) {$1$};
\node[font=\Large]
  at (1mat-5-5) {$I_n$};
\node[font=\Large]
  at (1mat-5-6) {$1$};
\node[font=\Large]
  at (1mat-6-2.west) {$0$};
\node[font=\Large]
  at (1mat-6-3) {$I_n$};
\node[font=\Large]
  at (1mat-6-4) {$1$};
\node[font=\Large]
  at (1mat-6-5) {$I_n$};
\node[font=\Large]
  at (1mat-6-6) {$1$};
\draw[decoration={brace,raise=7pt},decorate]
  (1mat-1-1.north west) --
  node[above=8pt] {$n^{2}$}
  (1mat-1-2.north east);
\draw[decoration={brace,raise=7pt},decorate]
  (1mat-1-3.north west) --
  node[above=8pt] {$n$}
  (1mat-1-3.north east);
\draw[decoration={brace,raise=7pt},decorate]
  (1mat-1-4.north west) --
  node[above=8pt] {$n$}
  (1mat-1-4.north east);
\draw[decoration={brace,raise=7pt},decorate]
  (1mat-1-5.north west) --
  node[above=8pt] {$n$}
  (1mat-1-5.north east);
\draw[decoration={brace,raise=7pt},decorate]
  (1mat-1-6.north west) --
  node[above=8pt] {$n$}
  (1mat-1-6.north east);
\draw[decoration={brace,raise=12pt},decorate]
  (1mat-1-6.north east) --
  node[right=15pt] {$n$}
  (1mat-1-6.south east);
\draw[decoration={brace,raise=12pt},decorate]
  (1mat-2-6.north east) --
  node[right=15pt] {$n^{2}$}
  (1mat-3-6.south east);
\draw[decoration={brace,raise=12pt},decorate]
  (1mat-4-6.north east) --
  node[right=15pt] {$n$}
  (1mat-4-6.south east);
\draw[decoration={brace,raise=12pt},decorate]
  (1mat-5-6.north east) --
  node[right=15pt] {$n$}
  (1mat-5-6.south east);
\draw[decoration={brace,raise=12pt},decorate]
  (1mat-6-6.north east) --
  node[right=15pt] {$n$}
  (1mat-6-6.south east);
\node[left=15pt] at (1mat-3-1.south west) {$C = $};
\node[right=30pt] at (1mat-3-6.south east) {.};
\end{tikzpicture}
\end{center}
This matrix can clearly be constructed in polynomial time. We claim that $C \in G_{n(n+4)}$ if and only if there is a $G \in G_n$ such that $AG = B$.

For ease of reference during the proof, we shall label the blocks of the matrix (and other matrices of the same size) as follows:

\begin{center}
\begin{tikzpicture}[
style1/.style={
  matrix of math nodes,
  every node/.append style={text width=#1,align=center,minimum height=5ex},
  nodes in empty cells,
  left delimiter=[,
  right delimiter=],
  },
style2/.style={
  matrix of math nodes,
  every node/.append style={text width=#1,align=center,minimum height=5ex},
  nodes in empty cells,
  left delimiter=\lbrace,
  right delimiter=\rbrace,
  }
]
\matrix[style1=0.65cm] (1mat)
{
  & & & & & \\
  & & & & & \\
  & & & & & \\
  & & & & & \\
  & & & & & \\
  & & & & & \\
};
\draw[dashed]
  (1mat-1-1.south west) -- (1mat-1-6.south east);
\draw[dashed]
  (1mat-3-1.south west) -- (1mat-3-6.south east);
\draw[dashed]
  (1mat-4-1.south west) -- (1mat-4-6.south east);
\draw[dashed]
  (1mat-5-1.south west) -- (1mat-5-6.south east);
\draw[dashed]
  (1mat-1-2.north east) -- (1mat-6-2.south east);
\draw[dashed]
  (1mat-1-3.north east) -- (1mat-6-3.south east);
\draw[dashed]
  (1mat-1-4.north east) -- (1mat-6-4.south east);
\draw[dashed]
  (1mat-1-5.north east) -- (1mat-6-5.south east);
\node[font=\Large]
  at (1mat-1-2.west) {$a1$};
\node[font=\Large]
  at (1mat-1-3) {$b1$};
\node[font=\Large]
  at (1mat-1-4) {$c1$};
\node[font=\Large]
  at (1mat-1-5) {$d1$};
\node[font=\Large]
  at (1mat-1-6) {$e1$};
\node[font=\Large]
  at (1mat-3-2.north west) {$a2$};
\node[font=\Large]
  at (1mat-3-3.north) {$b2$};
\node[font=\Large]
  at (1mat-3-4.north) {$c2$};
\node[font=\Large]
  at (1mat-3-5.north) {$d2$};
\node[font=\Large]
  at (1mat-3-6.north) {$e2$};
\node[font=\Large]
  at (1mat-4-2.west) {$a3$};
\node[font=\Large]
  at (1mat-4-3) {$b3$};
\node[font=\Large]
  at (1mat-4-4) {$c3$};
\node[font=\Large]
  at (1mat-4-5) {$d3$};
\node[font=\Large]
  at (1mat-4-6) {$e3$};
\node[font=\Large]
  at (1mat-5-2.west) {$a4$};
\node[font=\Large]
  at (1mat-5-3) {$b4$};
\node[font=\Large]
  at (1mat-5-4) {$c4$};
\node[font=\Large]
  at (1mat-5-5) {$d4$};
\node[font=\Large]
  at (1mat-5-6) {$e4$};
\node[font=\Large]
  at (1mat-6-2.west) {$a5$};
\node[font=\Large]
  at (1mat-6-3) {$b5$};
\node[font=\Large]
  at (1mat-6-4) {$c5$};
\node[font=\Large]
  at (1mat-6-5) {$d5$};
\node[font=\Large]
  at (1mat-6-6) {$e5$};
\draw[decoration={brace,raise=7pt},decorate]
  (1mat-1-1.north west) --
  node[above=8pt] {$n^2$}
  (1mat-1-2.north east);
\draw[decoration={brace,raise=7pt},decorate]
  (1mat-1-3.north west) --
  node[above=8pt] {$n$}
  (1mat-1-3.north east);
\draw[decoration={brace,raise=7pt},decorate]
  (1mat-1-4.north west) --
  node[above=8pt] {$n$}
  (1mat-1-4.north east);
\draw[decoration={brace,raise=7pt},decorate]
  (1mat-1-5.north west) --
  node[above=8pt] {$n$}
  (1mat-1-5.north east);
\draw[decoration={brace,raise=7pt},decorate]
  (1mat-1-6.north west) --
  node[above=8pt] {$n$}
  (1mat-1-6.north east);
  \draw[decoration={brace,raise=12pt},decorate]
  (1mat-1-6.north east) --
  node[right=15pt] {$n$}
  (1mat-1-6.south east);
\draw[decoration={brace,raise=12pt},decorate]
  (1mat-2-6.north east) --
  node[right=15pt] {$n^{2}$}
  (1mat-3-6.south east);
\draw[decoration={brace,raise=12pt},decorate]
  (1mat-4-6.north east) --
  node[right=15pt] {$n$}
  (1mat-4-6.south east);
\draw[decoration={brace,raise=12pt},decorate]
  (1mat-5-6.north east) --
  node[right=15pt] {$n$}
  (1mat-5-6.south east);
\draw[decoration={brace,raise=12pt},decorate]
  (1mat-6-6.north east) --
  node[right=15pt] {$n$}
  (1mat-6-6.south east);
\end{tikzpicture}
\end{center}
We shall also use $a$, $b$, $c$, $d$ and $e$ to refer to the sets of indices $\{1, \ldots, n^2\}$, $\{n^2 + 1, \ldots, n^2 + n\}$, $\{n^2 + n + 1, \ldots, n^2 + 2n\}$, $\{n^2 + 2n + 1, \ldots, n^2 + 3n\}$ and $\{n^2 + 3n + 1, \ldots, n^2 + 4n\}$ respectively, so that the columns indexed by these sets correspond to the blocks described above. We define $a_i = i, b_i = n^2 + i, c_i = n^2 + n + i, d_i = n^2 + 2n + i$ and $e_i = n^2 + 3n + i$ so that, for example, the $i$th column in block $c$ is column $c_i$.

We first show that if $AG = B$ for some $G \in G_n$ then $C \in G_{n(n+4)}$, by showing how to write $C$ as a product of call matrices. Let

\begin{center}
\begin{tikzpicture}[
style1/.style={
  matrix of math nodes,
  every node/.append style={text width=#1,align=center,minimum height=5ex},
  nodes in empty cells,
  left delimiter=[,
  right delimiter=],
  },
style2/.style={
  matrix of math nodes,
  every node/.append style={text width=#1,align=center,minimum height=5ex},
  nodes in empty cells,
  left delimiter=\lbrace,
  right delimiter=\rbrace,
  }
]
\matrix[style1=0.65cm] (1mat)
{
  & & & & & \\
  & & & & & \\
  & & & & & \\
  & & & & & \\
  & & & & & \\
  & & & & & \\
};
\draw[dashed]
  (1mat-1-1.south west) -- (1mat-1-6.south east);
\draw[dashed]
  (1mat-3-1.south west) -- (1mat-3-6.south east);
\draw[dashed]
  (1mat-4-1.south west) -- (1mat-4-6.south east);
\draw[dashed]
  (1mat-5-1.south west) -- (1mat-5-6.south east);
\draw[dashed]
  (1mat-1-2.north east) -- (1mat-6-2.south east);
\draw[dashed]
  (1mat-1-3.north east) -- (1mat-6-3.south east);
\draw[dashed]
  (1mat-1-4.north east) -- (1mat-6-4.south east);
\draw[dashed]
  (1mat-1-5.north east) -- (1mat-6-5.south east);
\node[font=\Large]
  at (1mat-1-2.west) {$1$};
\node[font=\Large]
  at (1mat-1-3) {$A$};
\node[font=\Large]
  at (1mat-1-4) {$0$};
\node[font=\Large]
  at (1mat-1-5) {$0$};
\node[font=\Large]
  at (1mat-1-6) {$0$};
\node[font=\Large]
  at (1mat-3-2.north west) {$1$};
\node[font=\Large]
  at (1mat-3-3.north) {$1$};
\node[font=\Large]
  at (1mat-3-4.north) {$0$};
\node[font=\Large]
  at (1mat-3-5.north) {$0$};
\node[font=\Large]
  at (1mat-3-6.north) {$0$};
\node[font=\Large]
  at (1mat-4-2.west) {$0$};
\node[font=\Large]
  at (1mat-4-3) {$0$};
\node[font=\Large]
  at (1mat-4-4) {$I_n$};
\node[font=\Large]
  at (1mat-4-5) {$0$};
\node[font=\Large]
  at (1mat-4-6) {$0$};
\node[font=\Large]
  at (1mat-5-2.west) {$0$};
\node[font=\Large]
  at (1mat-5-3) {$0$};
\node[font=\Large]
  at (1mat-5-4) {$0$};
\node[font=\Large]
  at (1mat-5-5) {$I_n$};
\node[font=\Large]
  at (1mat-5-6) {$0$};
\node[font=\Large]
  at (1mat-6-2.west) {$0$};
\node[font=\Large]
  at (1mat-6-3) {$0$};
\node[font=\Large]
  at (1mat-6-4) {$0$};
\node[font=\Large]
  at (1mat-6-5) {$0$};
\node[font=\Large]
  at (1mat-6-6) {$I_n$};
\draw[decoration={brace,raise=7pt},decorate]
  (1mat-1-1.north west) --
  node[above=8pt] {$a$}
  (1mat-1-2.north east);
\draw[decoration={brace,raise=7pt},decorate]
  (1mat-1-3.north west) --
  node[above=8pt] {$b$}
  (1mat-1-3.north east);
\draw[decoration={brace,raise=7pt},decorate]
  (1mat-1-4.north west) --
  node[above=8pt] {$c$}
  (1mat-1-4.north east);
\draw[decoration={brace,raise=7pt},decorate]
  (1mat-1-5.north west) --
  node[above=8pt] {$d$}
  (1mat-1-5.north east);
\draw[decoration={brace,raise=7pt},decorate]
  (1mat-1-6.north west) --
  node[above=8pt] {$e$}
  (1mat-1-6.north east);
\node[left=15pt] at (1mat-3-1.south west) {$Y_1 = $};
\node[right=10pt] at (1mat-3-6.south east) {,};

\matrix[style1=0.65cm, right=5.25cm] (2mat)
{
  & & & \\
  & & & \\
  & & & \\
  & & & \\
};
\draw[dashed]
  (2mat-3-1.south west) -- (2mat-3-4.south east);
\draw[dashed]
  (2mat-1-3.north east) -- (2mat-4-3.south east);
\node[font=\Large]
  at (2mat-2-2) {$I_{n(n+3)}$};
\node[font=\Large]
  at (2mat-2-4) {$0$};
\node[font=\Large]
  at (2mat-4-2) {$0$};
\node[font=\Large]
  at (2mat-4-4) {$G$};
\draw[decoration={brace,raise=7pt},decorate]
  (2mat-1-1.north west) --
  node[above=8pt] {$a,b,c,d$}
  (2mat-1-3.north east);
\draw[decoration={brace,raise=7pt},decorate]
  (2mat-1-4.north west) --
  node[above=8pt] {$e$}
  (2mat-1-4.north east);
\node[left=15pt] at (2mat-2-1.south west) {$Y_7 = $};
\node[right=10pt] at (2mat-2-4.south east) {,};
\end{tikzpicture}
\end{center}

\begin{align*}
& Y_2 = \prod_{i = 1}^n C[d_i, e_i],
& Y_3 = \prod_{i = 1}^n C[c_i, d_i], \qquad\qquad
& Y_4 = C[\{c_1, \ldots, c_n\}], \\
& Y_5 = \prod_{i = 1}^n C[b_i, e_i],
& Y_6 = \prod_{i = 1}^n C[c_i, e_i]. \qquad\qquad
&
\end{align*}

Since $n \geq 2$ it follows from Lemma \ref{boolingoss} that $Y_1$ is a product of call matrices in $G_{n(n+4)}$. The assumption that $G \in G_n$ means that $Y_7$ can be written as a product of call matrices in $G_{n(n+4)}$. Matrices $Y_2$ through to $Y_6$ are all explicitly defined as products of call matrices, and we note that the products in $Y_2, Y_3, Y_5$ and $Y_6$ can be taken in any order. It is then straightforward to check that $C = Y_1 Y_2 Y_3 Y_4 Y_5 Y_6 Y_7$.

Conversely, assume that $C \in G_{n(n+4)}$ and fix a sequence of call matrices $C_1, \dots, C_q$ such that $C = C_1 \cdots C_q$. We may clearly assume without loss of generality that the sequence contains no redundant factors, in the sense that $C_1\cdots C_t \neq C_1 \cdots C_t C_{t+1}$ (which since multiplication is monotonic means
that $C_1\cdots C_t \prec C_1 \cdots C_t C_{t+1}$) for all $t$. By defining a particular scattered subsequence of these factors, we shall construct an element
$G \in G_n$ satisfying $AG=B$.

We start by observing that for any scattered subsequence $C_{t_1}, \ldots, C_{t_p}$ of $C_1, \ldots, C_q$ we must have $C_{t_1}\cdots C_{t_p} \preceq C$. We proceed by establishing a series of claims.
\medskip

\begin{claim}
For each $t$, if $C_t$ is of the form $C[i,j]$ with $i \in a \cup b$ and $j \in c \cup d \cup e$, then $C_t = C[b_k, e_k]$ for some $k \in \lbrace 1,\ldots, n \rbrace$. Moreover, for each $k$, the matrix $C[b_k, e_k]$ must appear exactly once in the sequence $C_1, \dots, C_q$.
\end{claim}

\medskip

\begin{claimproof}
Suppose that $C_t = C[i,j]$, where $i \in a \cup b$ and $j \in c \cup d \cup e$. Since $C_t \preceq C$ we note that $i \notin a$, otherwise $C$ would contain a $1$ in  block $a3$, $a4$ or $a5$. Similarly, we note that $j \notin c \cup d$, since otherwise $C$ would contain a 1 in block $b3$ or $d2$. Finally, let $i=b_k$ and note that if $j \neq e_k$ then $C$ would contain an off-diagonal 1 in block $b5$.

It now follows that the only way that $C$ can contain the identity matrix in block $b5$ is for each of the matrices $C[b_k, e_k]$ to occur at least once in the sequence $C_1, \dots, C_q$. Assume for contradiction that $C[b_k, e_k]$ occurs more than once in the sequence, and suppose that $C_r$ and $C_s$ are the first two occurrences of this matrix. If $C_t$ is a factor occurring between
$C_r$ and $C_s$ then $C_rC_tC_s \preceq C$. We cannot have $C_t$ of the form $C[b_k, j]$ with $j \in a \cup b$, as otherwise $C_rC_tC_s$ --- and therefore also $C$ --- would have a $1$ in block $a5$ or an off-diagonal $1$ in block $b5$. Similarly, it cannot be of the form $C[e_k, j]$ with $j \in c \cup d \cup e$, since then $C_rC_tC_s$ would have a $1$ in block $b3$ or $d2$ or an off-diagonal $1$ in block $b5$. By the first part of the claim, $C_t$ cannot be of the form $C[b_k, j]$ with $j \in c \cup d \cup e \setminus\{e_k\}$ or $C[e_k, j]$ with $j \in a \cup b \setminus\{b_k\}$. It also cannot be of the form $C[b_k, e_k]$ as it lies strictly between the first two factors of this form. Therefore $C_t$ is not of the form $C[b_k, j]$ or $C[e_k, j]$ for any $j$, so $C_r$ commutes with $C_t$. Since $C_t$ was an arbitrary factor between $C_r$ and $C_s$, and since $C_r = C_s$ is an idempotent, we have $C_r C_{r+1} \cdots C_{s-1} C_s = C_r C_s C_{r+1} \cdots C_{s-1} = C_r C_{r+1} \cdots C_{s-1}$, contradicting our assumption that the sequence $C_1, \dots, C_q$ contains no redundant factors and thus proving the claim.\end{claimproof}
\medskip

For $k \in \{1, \ldots n\}$ let $w_k$ be the unique index such that $C_{w_k} = C[b_k, e_k]$.
\medskip

\begin{claim}
For each $k \leq n$, column $k$ of block $e1$ in the product $C_1 \cdots C_{w_k -1}$ is equal to the zero vector, but the same column in $C_1 \cdots C_{w_k}$ is equal to column $k$ of $A$.
\end{claim}

\medskip

\begin{claimproof}
We first show that in the product $C_1 \dots C_{w_k-1}$, the $k$th column of block $e1$ is equal to the zero vector. Assume that the vector is non-zero, so there is some $C_s$ with $s < w_k$ such that $C_1 \cdots C_{s-1}$ has the zero vector in this location, but $C_1 \cdots C_s$ does not. The factor $C_s$ must be of the form $C[e_k, j]$ and by Claim 1 and the fact that $s < w_k$, we have $j \in c \cup d \cup e$. We note that $j \notin c$, otherwise $C_s C_{w_k}$ --- and hence also $C$ --- would have a $1$ in block $b3$. Similarly we note that $j \notin e$, otherwise $C_s C_{w_k}$ would have an off-diagonal $1$ in block $b5$. This means $C_s$ must be of the form $C[e_k, d_i]$, but then columns $e_k$ and $d_i$ of $C_1 \cdots C_s$ are identical to each other, so this product has a $1$ in block $d1$, contradicting $C_1 \cdots C_s \preceq C$. Thus the $k$th column of block $e1$ in the product $C_1 \cdots C_{w_k-1}$ is equal to the zero vector.

Since the last element of the product $C_1 \cdots C_{w_k}$ is $C_{w_k} = C[b_k, e_k]$, columns $b_k$ and $e_k$ of the product are identical and the second part of the claim will follow if the $k$th column of block $b1$ in this product is equal to the $k$th column of $A$. Suppose this does not hold; since it does hold in $C$, the sequence $C_{w_k +1}, \dots, C_q$ must include a subsequence which transforms the $k$th column of $b1$ into the $k$th column of $A$. Let $C[b_k, j]$ be one such call. Notice that $j \notin a$, since otherwise $C_{w_k}C[b_k, j] \preceq C$ would have a 1 in block $a5$. Similarly, $j \notin b$, since otherwise $C_{w_k}C[b_k, j]$ would have an off-diagonal 1 in block $b5$. The first part of Claim 1 now shows that the only remaining possibility is $j=e_k$. However, this would result in two factors of $C$ both equal to $C[b_k, e_k]$, contradicting the second part of Claim 1. Hence column $k$ of block $b1$ in the product $C_1 \cdots C_{w_k}$ is equal to the $k$th column of $A$, completing the proof of the claim.
\end{claimproof}
\medskip

To provide a convenient starting point for induction, let $C_0$ be the $(n^2+4n)\times (n^2+4n)$ identity matrix, so that $C= C_0C_1 \cdots C_q$.
\medskip

\begin{claim}
For all $0\leq t\leq q$, every column in block $c1$, $d1$ or $e1$ of $C_0C_1 \cdots C_t$ is equal to either $\underline{0}$ or the maximum of some collection of columns of $A$.
\end{claim}

\medskip

\begin{claimproof}
We prove this by induction on $t$. When $t = 0$ this is clearly true, as each of these columns is equal to $\underline{0}$. For $t \geq 1$ we shall assume that the condition holds for the columns of the appropriate blocks in $C_1 \cdots C_{t-1}$. By Claim 1, $C_t$ is of the form (i) $C[i,j]$ with $i,j \in a \cup b$, (ii) $C[i,j]$ with $i,j \in c \cup d \cup e$ or (iii) $C[b_k, e_k]$ for some $k$. In case (i) the columns of blocks $c1$, $d1$ and $e1$ are the same as the corresponding columns in the product $C_1 \cdots C_{t-1}$, which satisfy the condition by assumption. In case (ii) two of the columns of blocks $c1$, $d1$ and $e1$ are equal to the maximum of the corresponding columns from the product $C_1 \cdots C_{t-1}$, and so they satisfy the condition of the claim, and the rest of the columns are the same as the corresponding columns from $C_1 \cdots C_{t-1}$. In case (iii) by Claim 2, column $k$ of block $e1$ is equal to the $k$th column of $A$, so satisfies the claim, and all other columns of blocks $c1$, $d1$ and $e1$ are equal to the corresponding columns in $C_1 \cdots C_{t-1}$. The claim follows by induction. \end{claimproof}
\medskip

To prove the theorem we want to construct a matrix $G \in G_n$ such that $AG = B$. Consider those call matrices $C_t$ (in order) for which there exists $k \leq n$ such that $w_k<t$ and column $k$ of block $e1$ in the product $C_1 \cdots C_t$ is strictly larger than the same column in the product $C_1 \cdots C_{t-1}$. (Intuitively, these are the factors which modify some column $k$ of block $e1$ subsequent to the factor $C_{w_k} = C[b_k,e_k]$ which copies column $k$ of $A$ into this column.) Let $C_{t_1}, \ldots, C_{t_p}$ denote the subsequence of these matrices. We shall assume that this sequence is non-empty, since otherwise $B = A$ and $AG = B$ is trivially satisfied by $G = I_n$.

\medskip

\begin{claim}
Each element of the sequence $C_{t_1}, \ldots, C_{t_p}$ is of the form $C_{t_r} = C[e_j, e_k]$ for some $j,k \leq n$ such that $w_j < t_r$ and $w_k < t_r$.
\end{claim}

\medskip

\begin{claimproof}
Let $C_{t_r}$ be an element of this subsequence. From the definition of the subsequence there is some $k \leq n$ such that $w_k < t_r$ and column $k$ of block $e1$ of $C_1 \cdots C_{t_r}$ is strictly larger than the same column in $C_1 \cdots C_{{t_r}-1}$. Thus $C_{t_r}$ must be of the form $C[i, e_k]$ for some $i \leq n^2 + 4n$. Claim 1 tells us that $i \notin a$. Since $t_r > w_k$, Claim 1 also tells us that $i \notin b$. By monotonicity, all columns in block $d1$ of $C_1 \cdots C_{t_r}$ are $\underline{0}$, so $i \notin d$. Assume for contradiction that $C_{t_r} = C[c_j, e_k]$ for some $j$.

We observe several relations between columns of various matrices:

\begin{itemize}
\item Column $j$ of $A$ is equal to column $j$ of block $c1$ in $C$.
\item By monotonicity, column $j$ of block $c1$ in $C$ is greater than or equal to the same column in $C_1 \cdots C_{t_r}$.
\item Since the last factor in the product $C_1 \cdots C_{t_r}$ is $C_{t_r} = C[c_j, e_k]$, columns $c_j$ and $e_k$ are equal in the resulting matrix.
\item By our choice of $k$, column $k$ of block $e1$ of $C_1 \cdots C_{t_r}$ is strictly greater than the same column in $C_1 \cdots C_{{t_r}-1}$.
\item Since $w_k < t_r$, by Claim 2 and monotonicity we know that column $k$ of block $e1$ in $C_1 \cdots C_{{t_r}-1}$ is greater than or equal to column $k$ of $A$.
\end{itemize}

Putting these together, we find that column $j$ of $A$ is strictly greater than column $k$ of $A$, which contradicts the maximal column condition. Thus $C_{t_r}$ must be of the form $C[e_j, e_k]$ for some $j$.

We know that $w_k < t_r$ by definition of the sequence, so for the second part of the claim it is enough to observe that $t_r \neq w_j$ since $C_{t_r} \neq C[b_j, e_k]$, and if $t_r < w_j$ then we would have $C[e_j, e_k]C[b_j, e_j] \preceq C$ and so $C$ would contain an off-diagonal $1$ in block $b5$, which is not the case.
\end{claimproof}

\medskip

We see from this claim that there are only two types of call matrix in the sequence which can modify a column of block $e1$. The first matrix in the sequence which modifies column $i$ of block $e1$ is $C_{w_i}$, and all subsequent matrices (if any) which modify this column are elements of the subsequence $C_{t_1}, \ldots, C_{t_p}$.

We now define a new sequence of call matrices in $G_n$. For $1 \leq r \leq p$, if $C_{t_r} = C[e_i, e_j]$ then let $D_r = C[i,j] \in G_n$.

\medskip
\begin{claim}
For each $r \leq p$ and $k \leq n$, column $k$ of block $e1$ of $C_1 \cdots C_{t_r}$ is equal to the zero vector if $t_r < w_k$, or equal to column $k$ of $A D_1 \cdots D_r$ otherwise.
\end{claim}

\medskip

\begin{claimproof}
By Claim 2, for each $k \leq n$ the $k$th column of block $e1$ in the product $C_1 \cdots C_{w_k-1}$ is equal to the zero vector. By monotonicity, if $t_r < w_k$ then the same column is equal to the zero vector in the product $C_1 \cdots C_{t_r}$. We prove by induction on $r$ that for each $k$ such that $w_k < t_r$, column $k$ of block $e1$ of $C_1\cdots C_{t_r}$ is equal to column $k$ of $A D_1 \cdots D_r$.

Let $t_0 = 0$. Then $t_0 < w_k$ for all $k$ so the claim holds for $r = 0$. Now assume the claim holds for $r-1$. That is, for each $k$ such that $w_k < t_{(r-1)}$, column $k$ of block $e1$ of $C_1\cdots C_{t_{(r-1)}}$ is equal to column $k$ of $A D_1 \cdots D_{r-1}$. Given $k \leq n$ such that $w_k < t_r$ we first consider the differences between column $k$ of block $e1$ in $C_1\cdots C_{t_{(r-1)}}$ and the same column in $C_1\cdots C_{t_r -1}$. No elements of the subsequence $C_{t_1}, \ldots, C_{t_p}$ occur among the matrices $C_{t_{(r-1)} + 1}, \ldots, C_{t_r -1}$ and we have noted that the only other factor of $C$ which can possibly modify column $k$ of block $e1$ is $C_{w_k}$. Thus the two columns under consideration are identical unless $t_{(r-1)} < w_k < t_r$, and in this case, by Claim 2, column $k$ of block $e1$ in $C_1\cdots C_{t_r -1}$ is equal to column $k$ of $A$. If $t_{(r-1)} < w_k < t_r$ then, since $C_{w_k}$ comes earlier in the sequence than any other matrix which can modify column $k$ of block $e1$, we see that $C_{t_r}$ is the first element of the sequence $C_{t_1}, \ldots, C_{t_p}$ which can possibly modify column $k$ of block $e1$, and thus $D_r$ is the first element of the sequence $D_1, \ldots, D_p$ which can possibly modify column $k$. Thus column $k$ of $AD_1 \cdots D_{r-1}$ is equal to column $k$ of $A$, and is therefore equal to column $k$ of $e1$ in $C_1\cdots C_{t_r -1}$. If $t_{(r-1)} < w_k < t_r$ does not hold then, since we are only considering $k$ such that $w_k < t_r$, we must have $w_k < t_{(r-1)}$. Thus by the inductive hypothesis we know that column $k$ of block $e1$ of $C_1\cdots C_{t_{(r-1)}}$ is equal to column $k$ of $A D_1 \cdots D_{r-1}$. Since none of the matrices $C_{t_{(r-1)} + 1}, \ldots, C_{t_r -1}$ modify this column, column $k$ of $e1$ in $C_1\cdots C_{t_r -1}$ is again equal to column $k$ of $A D_1 \cdots D_{r-1}$. Thus for all $k$ such that $w_k < t_r$ we know that column $k$ of $e1$ in $C_1\cdots C_{t_r -1}$ is equal to column $k$ of $A D_1 \cdots D_{r-1}$.

By Claim 4, $C_{t_r} = C[e_i, e_j]$ for some $i,j \leq n$ such that $w_i, w_j < t_r$, and then $D_r$ is defined to be $C[i,j]$. Let $k < n$ be such that $w_k < t_r$. We first consider the case when $k \notin \{i,j\}$. Since $k \notin \{i,j\}$, column $k$ of block $e1$ of $C_1\cdots C_{t_r}$ is equal to the same column in $C_1\cdots C_{t_r -1}$. By the previous paragraph, this column is equal to column $k$ of $AD_1 \cdots D_{r-1}$, and again because $k \notin \{i,j\}$, this is equal to column $k$ of $AD_1 \cdots D_r$. In the case where $k \in \{i,j\}$, column $k$ of block $e1$ of $C_1\cdots C_{t_r}$ is equal to the maximum of columns $i$ and $j$ of block $e1$ of $C_1\cdots C_{t_r -1}$. Since $w_i, w_j < t_r$, by the previous paragraph this is equal to the maximum of columns $i$ and $j$ of $AD_1 \cdots D_{r-1}$. Since $D_r = C[i,j]$, column $k$ of $AD_1 \cdots D_r$ is also equal to the maximum of columns $i$ and $j$ of $AD_1 \cdots D_{r-1}$, which completes the proof of the claim.
\end{claimproof}

\medskip

We now define $G = D_1 \cdots D_p$. For each $k \leq n$, the only factors of $C$ which can possibly modify column $k$ of block $e1$ are $C_{w_k}$ and elements of the subsequence $C_{t_1}, \ldots, C_{t_p}$. Therefore column $k$ of block $e1$ of $C$ is equal to the same column in $C_1 \cdots C_{t_p}$ if $w_k \leq t_p$, or it is equal to the same column in $C_1 \cdots C_{w_k}$ if $t_p < w_k$.

If $w_k \leq t_p$ then by Claim 5 column $k$ of $e1$ in $C_1 \cdots C_{t_p}$ is equal to column $k$ of $AG = AD_1 \cdots D_p$, and therefore column $k$ of $e1$ in $C$ is also equal to column $k$ of $AG$.

If $t_p < w_k$ then $C_{w_k}$ is the last factor of $C$ to modify column $k$ of block $e1$, so by Claim 2, column $k$ of $e1$ in $C$ is equal to column $k$ of $A$. We know from Claim 4 that if $C_{t_r} = C[e_j,e_k]$ for some $j$ and $r$ then $w_k < t_r$. However $t_r \leq w_k$ for all $r \leq p$, so no element of the sequence $C_{t_1}, \ldots, C_{t_p}$ is equal to $C[e_j,e_k]$ for any $j \leq n$. Therefore no element of the sequence $D_1, \ldots, D_p$ is equal to $C[j,k]$ for any $j \leq n$, and so column $k$ of $AG = AD_1\cdots D_p$ is also equal to column $k$ of $A$.

In both cases, column $k$ of block $e1$ of $C$ is equal to column $k$ of $AG$. Therefore $AG$ is equal to block $e1$ of $C$, which is equal to $B$, and so $G \in G_n$ satisfies $AG = B$ as required to complete the proof.
\end{proof}

\bibliographystyle{plain}

\end{document}